\definecolor{mygrey}{gray}{0.70}
\definecolor{mygreen}{rgb}{0,.75,0}
\definecolor{myred}{rgb}{1,0,0}
\definecolor{orange}{rgb}{1,.5,0}
\numberwithin{equation}{section}
\newtheorem{thm}[equation]{Theorem}
\newtheorem{claim}[equation]{Claim}
\newtheorem{cor}[equation]{Corollary}
\newtheorem{lem}[equation]{Lemma}
\newtheorem{prop}[equation]{Proposition}
\newtheorem*{thrm}{Main Theorem}
\newtheorem*{thrm1}{\thmref{th:dim}}
\newtheorem*{thrm2}{\thmref{th:estimate}}
\theoremstyle{definition}
\newtheorem{defn}[equation]{Definition}
\theoremstyle{remark}
\newtheorem{rem}[equation]{Remark}
\newcommand{\thmref}[1]{Theorem~\ref{#1}}
\newcommand{\propref}[1]{Proposition~\ref{#1}}
\newcommand{\lemref}[1]{Lemma~\ref{#1}}
\newcommand{\secref}[1]{Section~\ref{#1}}
\newcommand{\claimref}[1]{Claim~\ref{#1}}
\newcommand{\conv}[1]
{\vskip -.3cm $$ {\vbox{\vrule width 1mm \hskip 4mm \hbox{\parbox{14cm}{ {#1}
}}}\hfill} $$}
\renewcommand\a{\alpha}
\newcommand\af{\mathfrak{a}}
\newcommand\an{\measuredangle}
\newcommand\B{\mathcal{B}}
\renewcommand\b{\beta}
\newcommand\bnd{\operatorname{\textbf{bnd}}}
\newcommand\C{\mathbb C}
\newcommand\ch{\check}
\renewcommand\d{\delta}
\newcommand\diag{\operatorname{diag}}
\newcommand\diam{\operatorname{diam}}
\newcommand\E{\mathcal{E}}
\newcommand\e{\varepsilon}
\newcommand\emp{\varnothing}
\newcommand\esssup{\operatorname{ess\,sup}}
\newcommand\G{\Gamma}
\newcommand\g{\gamma}
\newcommand\Gr{\mathcal{G}}
\newcommand\h{\boldsymbol{h}}
\renewcommand\l{\lambda}
\newcommand\la{\langle}
\newcommand\ov{\overline}
\renewcommand\P{\mathbf{P}}
\renewcommand\Pr{\mathcal{P}}
\newcommand\pt{\partial}
\newcommand\R{\mathbb{R}}
\renewcommand\r{\rho}
\newcommand\ra{\rangle}
\newcommand\spn{\operatorname{span}}
\newcommand\supp{\operatorname{supp}}
\renewcommand\th{\theta}
\newcommand{\toto}{\mathop{\,\longrightarrow\,}}
\newcommand\un{\underline}
\newcommand\V{\mathcal{V}}
\newcommand\X{\mathfrak{X}}
\newcommand\x{{\boldsymbol{x}}}
\newcommand\Z{\mathbb{Z}}
\newcommand\z{\zeta}
\begin{document}

\title{Matrix random products with singular harmonic measure}

\author{Vadim A. Kaimanovich}
\address{Mathematics, Jacobs University Bremen, Campus Ring 1, D-28759, Bremen, Germany}
\email{v.kaimanovich@jacobs-university.de}

\author{Vincent Le Prince}
\address{IRMAR, Campus de Beaulieu, 35042 Rennes, France}
\email{vincent.leprince@univ-rennes1.fr}

\subjclass[2000]{Primary 60J50; Secondary 28A78, 37D25, 53C35}

\keywords{Random walk, matrix random product, Lyapunov exponents, harmonic
measure, Hausdorff dimension}

\begin{abstract}
Any Zariski dense countable subgroup of $SL(d,\R)$ is shown to carry a
non-degenerate finitely supported symmetric random walk such that its harmonic
measure on the flag space is singular. The main ingredients of the proof are:
(1) a new upper estimate for the Hausdorff dimension of the projections of the
harmonic measure onto Grassmannians in $\R^d$ in terms of the associated
differential entropies and differences between the Lyapunov exponents; (2) an
explicit construction of random walks with uniformly bounded entropy and
Lyapunov exponents going to infinity.
\end{abstract}

\maketitle

\thispagestyle{empty}

\section*{Introduction}

The notion of \emph{harmonic measure} (historically first defined in the
framework of the theory of potential) has an explicit probabilistic
description in terms of the dynamical properties of the associated Markov
processes as a \emph{hitting distribution}. Moreover, ``hitting'' can be
interpreted both as attaining the target set in the usual sense (in finite
time) and as converging to it at infinity (when the target is attached as a
boundary to the state space).

In concrete situations the target set is usually endowed with additional
structures giving rise to other ``natural'' measures (e.g., smooth, uniform,
Haar, Hausdorff, maximal entropy, etc.), which leads to the question about
\emph{coincidence} of the harmonic and these ``other'' measures (or, in a
somewhat weaker form, about coincidence of the respective measure classes). As
a general rule, such a coincidence either implies that the considered system
has very special symmetry properties or is not possible at all. However,
establishing it in a rigorous way is a notoriously difficult problem. See, for
example, the cases of the Brownian motion on cocompact negatively curved
Riemannian manifolds \cite{Katok88,Ledrappier95}, of Julia sets of
endomorphisms of the Riemann sphere
\cite{Przytycki-Urbanski-Zdunik89,Zdunik91} and of polynomial-like maps
\cite{Lyubich-Volberg95, Balogh-Popovici-Volberg97, Zdunik97}, or of Cantor
repellers in a Euclidean space \cite{Makarov-Volberg86,Volberg93}.

\medskip

In the present paper we consider the singularity problem for \emph{random
matrix products} $x_n=h_1h_2\dots h_n$ with Bernoulli increments, or, in other
words, for \emph{random walks} on the group $SL(d,\R)$ (or its subgroups).
Actually, our results are also valid for general non-compact semi-simple Lie
groups, see \cite{LePrince04}, but for the sake of expositional simplicity we
restrict ourselves just to the case of matrix groups. Another simplification
is that we always assume that the considered subgroups of $SL(d,\R)$ are
Zariski dense. It guarantees that the harmonic measure of the random walk is
concentrated on the space of full flags in $\R^d$ (rather than on its quotient
determined by the degeneracies of the Lyapunov spectrum). Modulo an
appropriate technical modification our results remain valid without this
assumption as well.

If the distribution $\mu$ of the increments $\{h_n\}$ has a finite \emph{first
moment} $\int\log\|h\|\,d\mu(h)$, then by the famous \emph{Oseledets
multiplicative ergodic theorem} \cite{Oseledec68} there exists the
\emph{Lyapunov spectrum} $\l$ consisting of \emph{Lyapunov exponents}
$\l_1\ge\l_2\ge\dots\l_d$ (they determine the growth of the random products in
various directions), and, moreover, a.e.\ sample path $\{x_n\}$ gives rise to
the associated \emph{Lyapunov flag} (filtration) of subspaces in $\R^d$. The
distribution $\nu=\nu(\mu)$ of these Lyapunov flags is then naturally called
the \emph{harmonic measure} of the random product. The geometric
interpretation of the Oseledets theorem \cite{Kaimanovich89} is that the
sequence $x_n o$ asymptotically follows a geodesic in the associated
Riemannian symmetric space $S=SL(d,\R)/SO(d)$ (here $o=SO(d)\in S$); the
Lyapunov spectrum determines the Cartan (``radial'') part of this geodesic,
whereas the Lyapunov flag determines its direction.

If $\supp\mu$ generates a Zariski dense subgroup of $SL(d,\R)$, then the
Lyapunov spectrum is \emph{simple} ($\equiv$ the vector $\l$ lies in the
interior of the positive Weyl chamber), see
\cite{Guivarch-Raugi85,Goldsheid-Margulis89}, so that the associated Lyapunov
flags are full ($\equiv$ contain subspaces of all the intermediate
dimensions). The space $\B=\B(d)$ of full flags in $\R^d$ is also known under
the name of the \emph{Furstenberg boundary} of the associated symmetric space
$S$, see \cite{Furstenberg63} for its definition and
\cite{Kaimanovich89,Guivarch-Ji-Taylor98} for its relation with the boundaries
of various compactifications of Riemannian symmetric spaces. In the case when
the Lyapunov spectrum is simple, a.e.\ sequence $x_n o$ is convergent in all
reasonable compactifications of the symmetric space $S$, and the corresponding
hitting distributions can be identified with the harmonic measure $\nu$ on
$\B$.

\medskip

The flag space $\B$ is endowed with a natural smooth structure, therefore it
makes sense to compare the harmonic measure class with the smooth measure
class (the latter class contains the unique rotation invariant measure on the
flag space). The harmonic measure is ergodic, so that it is either absolutely
continuous or singular with respect to the smooth (or any other
quasi-invariant) measure class. Accordingly, we shall call the jump
distribution $\mu$ either \emph{absolutely continuous} or \emph{singular at
infinity}.

\medskip

If the measure $\mu$ is absolutely continuous with respect to the Haar measure
on the group $SL(d,\R)$ (or even weaker: a certain convolution power of $\mu$
contains an absolutely continuous component) then it is absolutely continuous
at infinity \cite{Furstenberg63,Azencott70}.

As it turns out, there are also measures $\mu$ which are absolutely continuous
at infinity in spite of being supported by a \emph{discrete subgroup} of
$SL(d,\R)$. Namely, Furstenberg \cite{Furstenberg71,Furstenberg73} showed that
any lattice (for instance, $SL(d,\Z)$) carries a probability measure $\mu$
with a finite first moment which is absolutely continuous at infinity. It was
used for proving one of the first results on the rigidity of lattices in
semi-simple Lie groups.

Furstenberg's construction of measures absolutely continuous at infinity
(based on discretization of the Brownian motion on the associated symmetric
space) was further extended and generalized in
\cite{Lyons-Sullivan84,Kaimanovich92a,Ballmann-Ledrappier96}. Another
construction of random walks with a given harmonic measure was recently
developed by Connell and Muchnik \cite{Connell-Muchnik07,Connell-Muchnik07a}.
Note that the measures $\mu$ arising from all these constructions are
inherently infinitely supported.

\medskip

Let us now look at the singularity vs. absolute continuity dichotomy for the
harmonic measure from the ``singularity end''. The first result of this kind
was obtained by Chatterji \cite{Chatterji66} who established singularity of
the distribution of infinite continuous fractions with independent digits.
This distribution can indeed be viewed as the harmonic measure associated to a
certain random walk on $SL(2,\Z)\subset SL(2,\R)$ \cite{Furstenberg63a}. See
\cite{Chassaing-Letac-Mora83} for an explicit description of the harmonic
measure in a similar situation and \cite{Kifer-Peres-Weiss01} for a recent
very general result on singularity of distributions of infinite continuous
fractions. Extending the continuous fractions setup Guivarc'h and Le Jan
\cite{Guivarch-LeJan90,Guivarch-LeJan93} proved that the measures $\mu$ on
non-compact lattices in $SL(2,\R)$ satisfying a certain moment condition (in
particular, all finitely supported measures) are singular at infinity.

Together with some other circumstantial evidence (as, for instance, pairwise
singularity of various natural boundary measure classes on the visibility
boundary of the universal cover of compact negatively curved manidolds
\cite{Ledrappier95} or on the hyperbolic boundary of free products
\cite{Kaimanovich-LePrince08p}) the aforementioned results lead to the
following

\medskip

{\parindent 0pt \textsc{Conjecture.} Any finitely supported probability
measure on $SL(d,\R)$ is singular at infinity. }

\medskip

The principal result of the present paper is the following

\begin{thrm} \label{th:mn}
Any countable Zariski dense subgroup $\G$ of $SL(d,\R)$ carries a
non-degenerate symmetric probability measure $\mu$ which is singular at
infinity. Moreover, if $\G$ is finitely generated then the measure $\mu$ can
be chosen to be finitely supported.
\end{thrm}

Note here an important difference between the case $d=2$ and the higher rank
case $d\ge 3$. If $\G$ is discrete, then for $d=2$ the boundary circle can be
endowed with different $\G$-invariant smooth structures (parameterized by the
points from the Teichm\"uller space of the quotient surface), so that
Furstenberg's discretization construction readily provides existence of
measures $\mu$ (not finitely supported though!) which are singular at
infinity. Obviously, this approach does not work in the higher rank case,
where our work provides first examples of measures singular at infinity.

\medskip

Our principal tool for establishing singularity is the notion of the
\emph{dimension} of a measure. Namely, we show that in the setup of our Main
Theorem the measure $\mu$ can be chosen in such a way that the Hausdorff
dimension of the associated harmonic measure (more precisely, of its
projection onto one of the Grassmanians in $\R^d$) is arbitrarily small, which
obviously implies singularity. For doing this we establish an inequality
connecting the \emph{Hausdorff dimension} with the \emph{asymptotic entropy}
and the \emph{Lyapunov exponents} of the random walk.

\medskip

There are several notions of dimension of a probability measure $m$ on a
compact metric space $(Z,\r)$ (see \cite{Pesin97} and the discussion in
\secref{sec:Hausd} below). These notions roughly fall into two categories. The
\emph{global} ones are obtained by looking at the dimension of sets which
``almost'' (up to a piece of small measure $m$) coincide with $Z$; in
particular, the \emph{Hausdorff dimension} $\dim_H m$ of the measure $m$ is
$\inf\left\{ \dim_H A : m(A)=1 \right\}$, where $\dim_H A$ denotes the
Hausdorff dimension of a subset $A\subset Z$. On the other hand, the
\emph{local} ones are related to the asymptotic behavior of the measures of
concentric balls $B(z,r)$ in $Z$ as the radius $r$ tends to $0$. More
precisely, the lower $\un\dim_P m(z)$ (resp., the upper $\ov\dim_P m(z)$)
\emph{pointwise dimension} of the measure $m$ at a point $z\in Z$ is defined
as the $\liminf$ (resp., $\limsup$) of the ratio $\log m B(z,r)/\log r$ as
$r\to 0$. In these terms $\dim_H m$ coincides with $\esssup_z \un\dim_P m(z)$.
In particular, if $\un\dim_P m(z)=\ov\dim_P m(z)=D$ almost everywhere for a
constant $D$, then $\dim_H m=D$. Moreover, in the latter case all the
reasonable definitions of dimension of the measure $m$ coincide.

\medskip

Numerous variants of the formula $\dim m = h/\l$ relating the dimension of an
invariant measure $m$ of a differentiable map with its entropy $h=h(m)$ and
the characteristic exponent(s) $\l=\l(m)$ have been known since the late 70s
-- early 80s, see \cite{Ledrappier81,Young82} and the references therein.
Ledrappier \cite{Ledrappier83} was the first to carry it over to the context
of random walks by establishing the formula $\dim \nu =h/2\l$ for the
dimension of the harmonic measure of random walks on discrete subgroups of
$SL(2,\C)$. Here $h=h(\mu)$ is the asymptotic entropy of the random walk with
the jump distribution $\mu$, and $\l=\l(\mu)$ is the Lyapunov exponent.
However, the dimension appearing in this formula is somewhat different from
the classical Hausdorff dimension, because convergence of the ratios $\log m
B(z,r)/\log r$ to the value of dimension is only established in measure
(rather than almost surely).

Le Prince has recently extended the approach of Ledrappier (also see
\cite{Ledrappier01,Kaimanovich98}) to a general discrete group $G$ of
isometries of a Gromov hyperbolic space $X$; in this case the Gromov product
induces a natural metric (more rigorously, a gauge) on the hyperbolic boundary
$\pt X$. He proved that if a probability measure $\mu$ on $G$ has a finite
first moment with respect to the metric on $X$, then the associated harmonic
measure $\nu$ on $\pt X$ has the property that $\ov\dim_P\nu(z)\le h/\ell$ for
$\nu$-a.e. point $z\in\pt X$ (which implies that $\dim_H\nu\le h/\ell$)
\cite{LePrince07}, and that the \emph{box dimension} of the measure $\nu$ is
precisely $h/\ell$ \cite{LePrince08} (here, as before, $h=h(\mu)$ is the
asymptotic entropy of the random walk $(G,\mu)$, and $\ell=\ell(\mu)$ is its
linear rate of escape with respect to the hyperbolic metric). Note that the
question about the pointwise dimension, i.e., about the asymptotic behaviour
of the ratios $\log m B(z,r)/\log r$ for a.e. point $z\in\pt X$, is still open
in this generality. It was recently proved that these ratios converge to
$h/\ell$ almost surely for any symmetric finitely supported random walk on $G$
\cite{Blachere-Haissinsky-Mathieu08}. This should also be true for finitely
supported random walks which are not necessarily symmetric. Namely, the
results of Ancona \cite{Ancona88} on almost multiplicativity of the Green
function imply that in this situation the harmonic measure has the
\emph{doubling property }, which in turn can be used to prove existence of the
pointwise dimension.

Yet another example is the inequality $\dim_H\nu\le h/\ell$ for the Hausdorff
dimension of the harmonic measure of iterated function systems in the
Euclidean space \cite{Nicol-Sidorov-Broomhead02} (when is the equality
attained?).

In the context of random walks on general countable groups the asymptotic
entropy $h(\mu)$, the rate of escape $\ell(\mu)$ and the exponential growth
rate of the group $v$ satisfy the inequality $h(\mu)\le\ell(\mu)v$ (it was
first established by Guivarc'h \cite{Guivarch79}; recently Vershik
\cite{Vershik00} revitalized interest in it). As it was explained above, in
the ``hyperbolic'' situations the ratio $h/\ell$ can (under various additional
conditions) be interpreted as the dimension of the harmonic measure, whereas
$v$ is the Hausdorff dimension of the boundary itself.

\medskip

Unfortunately, the aforementioned formulas of the type $\dim\nu=h/\ell$ can
not be directly carried over to the higher rank case. The reason for this is
that in this case the boundary is ``assembled'' of several components which
may have different dimension properties. The simplest illustration is provided
by the product $\G_1\times\G_2$ of two discrete subgroups of $SL(2,\R)$ with a
product jump distribution $\mu=\mu_1\otimes\mu_2$. The Furstenberg boundary of
the bidisk associated with the group $SL(2,\R)\times SL(2,\R)$ is the product
of the boundary circles of each hyperbolic disc. The harmonic measure of the
jump distribution $\mu$ is then the product of the harmonic measures of the
jump distributions $\mu_1$ and $\mu_2$ which may have different dimensions (it
would be interesting to investigate the dimension properties of the harmonic
measure on the boundary of the polydisc for groups and random walks which do
not split into a direct product). In the case of $SL(d,\R)$ the role of
elementary ``building blocks'' of the flag space $\B$ is played by the
Grassmannians $\Gr_i$ in $\R^d$ (which are the minimal equivariant quotients
of $\B$).

In order to deal with spaces with a $\mu$-stationary measure other than the
Poisson boundary of the random walk $(\G,\mu)$ it is convenient to use the
notion of the \emph{differential $\mu$-entropy} first introduced by
Furstenberg \cite{Furstenberg71}. It measures the ``amount of information''
about the random walk contained in the action. The differential entropy does
not exceed the asymptotic entropy of the random walk, coincides with it for
the Poisson boundary, and is strictly smaller for any proper quotient of the
Poisson boundary \cite{Kaimanovich-Vershik83}. Our point is that in the
formulas of the type $\dim\nu=h/\ell$ (see above) for spaces with a
$\mu$-stationary measure $\nu$ other than the Poisson boundary, the asymptotic
entropy $h$ should be replaced with the differential entropy of the space.

In the setup of our Main Theorem let $\mu$ be a non-degenerate probability
measure on $\G$ with a finite first moment, so that the associated Lyapunov
spectrum $\l_1>\l_2>\dots>\l_d$ is simple. Denote by $\nu_i$ the image of the
harmonic measure $\nu$ on the flag space $\B$ under the projection onto the
rank $i$ Grassmannian $\Gr_i$, and let $E_i$ be the corresponding differential
entropy. All the spaces $(\Gr_i,\nu_i)$ are quotients of the Poisson boundary
of the random walk $(\G,\mu)$; if $\G$ is discrete then $(\B,\nu)$ is the
Poisson boundary of the random walk $(\G,\mu)$
\cite{Kaimanovich85,Ledrappier85}, otherwise the Poisson boundary of the
random walk $(\G,\mu)$ may be bigger than the flag space
\cite{Kaimanovich-Vershik83,Bader-Shalom06,Brofferio06} (note that it is still
unknown whether all the proper quotients of the flag space $\B$ are also
always proper measure-theoretically with respect to the corresponding
$\mu$-stationary measures).

\begin{thrm1}
The Hausdorff dimensions of the measures $\nu_i$ satisfy the inequalities
$$
\dim \nu_i \le \frac{E_i}{\l_i-\l_{i+1}} \;.
$$
\end{thrm1}

In the case when $\G$ is a discrete subgroup of $SL(2,\R)$ the right-hand side
of the above inequality precisely coincides with the ratio $h(\mu)/2\l(\mu)$
from Ledrappier's formula \cite{Ledrappier83}.

\medskip

Our Main Theorem follows from a combination of this dimension estimate with
the following construction:

\begin{thrm2}
Let $\mu$ be a non-degenerate probability measure on $\G$ whose entropy and
the first moment are finite, and let $\g\in\G$ be an $\R$-regular element
(which always exists in a Zariski dense group). Then the measures
$$
\mu^k = \frac12\mu+\frac14\left( \d_{\g^k}+\d_{\g^{-k}} \right)
$$
have the property that the entropies $H(\mu^k)$ (and therefore the
corresponding asymptotic entropies as well) are uniformly bounded, whereas the
lengths of their Lyapunov vectors $\l(\mu^k)$ (equivalently, the top Lyapunov
exponents $\l_1(\mu^k)$) go to infinity.
\end{thrm2}

\medskip

Let us mention here several issues naturally arising in connection with our
results.

\medskip

1. Our study of dimension in the present paper is subordinate to proving
singularity at infinity, so that we only use rather rudimentary facts about
the dimension of the harmonic measure. Under what conditions is the inequality
from \thmref{th:dim} realized as equality? Is there an exact formula for the
dimension of the harmonic measure on the flag space? Of course, in these
questions one has to specify precisely which definition of the dimension is
used, cf. \secref{sec:Hausd}.

\medskip

2. For our purposes it was enough to establish in \thmref{th:estimate} that
the lengths of the Lyapunov vectors $\l(\mu^k)$ go to infinity. Is this also
true for all the spectral gaps $\l_i(\mu^k)-\l_{i+1}(\mu^k)$ (which would
imply that the dimensions of the harmonic measures on $\B$ go to 0)? What
happens with the harmonic measures $\nu^k$ (or with their projection); do they
weakly converge, and if so what is the limit? Note that one can ask the latter
questions in the hyperbolic (rank 1) situation as well, cf. \cite{LePrince07}.

\medskip

3. Another interesting issue is the connection of the harmonic measure with
the invariant measures of the geodesic flow (more generally, of Weyl chambers
in the case of higher rank locally symmetric spaces). As it was proved by
Katok and Spatzier \cite{Katok-Spatzier96}, the only invariant measure of the
Weyl chamber flow associated to a lattice $\G$ in $SL(d,\R)$ with positive
entropy along all one-dimensional directions is the Haar measure. In view of
the correspondence between the Radon invariant measures of the Weyl chamber
flow and $\G$-invariant Radon measures on the principal stratum of the product
$\B\times\B$ (cf. \cite{Kaimanovich90} in the hyperbolic case), it implies
that singular harmonic measures on $\B$ either can not be lifted to a Radon
invariant measure on $\B\times\B$ (contrary to the hyperbolic case
\cite{Kaimanovich90}) or, if they can be lifted to a Radon invariant measure
on $\B\times\B$, then the associated invariant measure of the Weyl chamber
flow has a vanishing directional entropy (in our setup the latter option most
likely can be eliminated).

\medskip

The paper has the following structure. In \secref{sec:RW} we set up the
notations and introduce the necessary background information about random
products (random walks) on groups. In \secref{sec:exponents},
\secref{sec:flags} and \secref{sec:harm} we specialize these notions to the
case of matrix random products, remind the Oseledets multiplicative ergodic
theorem, give several equivalent descriptions of the limit flags (Lyapunov
filtrations) of random products, and introduce the harmonic measure $\nu$ on
the flag space $\B$ and its projections $\nu_i$ to the Grassmannians $\Gr_i$
in $\R^d$. In \secref{sec:entr} we remind the definitions of the asymptotic
entropy of a random walk and of the differential entropy of a $\mu$-stationary
measure. In \lemref{lem:dentr} we establish an inequality relating the
exponential rate of decay of translates of a stationary measure along the
sample paths of the reversed random walk with the differential entropy. In
\secref{sec:Hausd} we discuss the notion of dimension of a probability measure
on a compact metric space. We introduce new \emph{lower} and \emph{upper mean
dimensions} and establish in \thmref{th:dims} inequalities (used in the proof
of \thmref{th:dim}) relating them to other more traditional definitions of
dimension (including the classical Hausdorff dimension). After discussing the
notion of the limit set in the higher rank case in \secref{sec:limit}, we
finally pass to proving our Main Theorem in \secref{sec:link} and
\secref{sec:const}. Namely, in \secref{sec:link} we establish the inequality
of \thmref{th:dim}, and in \secref{sec:const} we prove \thmref{th:estimate} on
existence of jump distributions with uniformly bounded entropy and arbitrarily
big Lyapunov exponents.

\section{Random products} \label{sec:RW}

We begin by recalling the basic definitions from the theory of random walks on
discrete groups, e.g., see \cite{Kaimanovich-Vershik83,Kaimanovich00a}. The
\emph{random walk} determined by a probability measure $\mu$ on a countable
group $\G$ is a Markov chain with the transition probabilities
$$
p(g,gh)=\mu(h) \;, \qquad g,h\in\G \;.
$$

\conv{Without loss of generality (passing, if necessary, to the subgroup
generated by the support $\supp\mu$ of the measure $\mu$) we shall always
assume that the measure $\mu$ is \emph{non-degenerate} in the sense that the
smallest subgroup of $\G$ containing $\supp\mu$ is $\G$ itself.}

If the position of the random walk at time $0$ is a point $x_0\in\G$, then its
position at time $n>0$ is the product
$$
x_n = x_0 h_1 h_2 \dots h_n \;,
$$
where $(h_n)_{n\ge 1}$ is the sequence of independent $\mu$-distributed
\emph{increments} of the random walk. Therefore, provided that $x_0$ is the
group identity $e$, the distribution of $x_n$ is the $n$-fold convolution
$\mu^{*n}$ of the measure $\mu$.

Below it will be convenient to consider bilateral sequences of Bernoulli
$\mu$-distributed increments $\h=(h_n)_{n\in\Z}$ and the associated
\emph{bilateral sample paths} $\x=(x_n)_{n\in\Z}$ obtained by extending the
formula
$$
x_{n+1}=x_n h_{n+1}
$$
to all $n\in\Z$ under the condition $x_0=e$, so that
\begin{equation} \label{eq:iso}
x_n =
\begin{cases}
    h_0^{-1}h_{-1}^{-1} \dots h_{n+1}^{-1}\;, & n<0\;; \\
    e\;, & n=0\;; \\
    h_1 h_2\dots h_n\;, & n>0\;. \\
\end{cases}
\end{equation}
Thus, the ``negative part'' $\ch x_n=x_{-n},\;n\ge 0,$ of the bilateral random
walk is the random walk on $\G$ starting from the group identity at time 0 and
governed by the \emph{reflected measure} $\ch\mu(g)=\mu(g^{-1})$.

We shall denote by $\P$ the probability measure in the space $\X=\G^\Z$ of
bilateral sample paths $\x$ which is the image of the Bernoulli measure in the
space of bilateral increments $\h$ under the isomorphism \eqref{eq:iso}. It is
preserved by the transformation $T$ induced by the shift in the space of
increments:
\begin{equation} \label{eq:T}
(T\x)_n = h_1^{-1} x_{n+1}\;, \qquad n\in\Z\;.
\end{equation}

\section{Lyapunov exponents} \label{sec:exponents}

We shall fix a standard basis $\E=(e_i)_{1\le i\le d}$ in $\R^d$ and identify
elements of the Lie group $G=SL(d,\R)$ with their matrices in this basis.
Throughout the paper we shall use the standard Euclidean norms associated with
this basis both for vectors and matrices in $\R^d$.

\conv{From now on we shall assume that $\G$ is a countable subgroup of the
group $G=SL(d,\R)$.}

Denote by $A$ the \emph{Cartan subgroup} of $G$ consisting of diagonal
matrices $a=\diag(a_i)$ with positive entries $a_i$, and by
$$
\af = \left\{ \a=(\a_1,\a_2,\dots,\a_d)\in\R^d : \sum\a_i=0 \right\}
$$
the associated \emph{Cartan subalgebra}, so that $A=\exp\af$. Let
$$
\af_+ = \{\a\in\af: \a_1>\a_2> \dots> \a_d \} \;,
$$
be the standard \emph{positive Weyl chamber} in $\af$, and let
$$
A_+ = \exp\af_+ = \{a\in A: a_1> a_2> \dots > a_d\} \;.
$$
Denote the closures of $\af_+$ and $A_+$ by $\ov{\af_+}$ and $\ov{A_+}$,
respectively.

Any element $g\in G$ can be presented as $g=k_1 a k_2$ with $k_{1,2}\in
K=SO(d)$ (which is a maximal compact subgroup in $G$) and a uniquely
determined $a=a(g)\in\ov{A_+}$ (\emph{Cartan} or \emph{polar} decomposition).

\conv{We shall always assume that the probability measure $\mu$ on $\G$ has a
\emph{finite first moment} in the ambient group $G$, i.e., $\sum \log \|g\|
\mu(g) < \infty$. }

Then the asymptotic behaviour of the random walk $(\G,\mu)$ is described by
the famous \emph{Oseledets multiplicative ergodic theorem} which we shall
state in the form due to Kaimanovich \cite{Kaimanovich89} (and in the
generality suitable for our purposes):

\begin{thm} \label{th:osel}
There exists a vector $\l=\l(\mu)\in\ov\af_+$ (the \emph{Lyapunov spectrum} of
the random walk) such that
$$
\frac1n \log a(x_n) \toto_{n\to+\infty} \l
$$
for $\P$-a.e.\ sample path $\x\in\X$. Moreover, for $\P$-a.e.\ $\x$ there
exists a uniquely determined positive definite symmetric matrix
$$
g=g(\x)=k(\exp\l) k^{-1} \;, \qquad k\in K \;,
$$
such that
$$
\log \|g^{-n}x_n\|=o(n) \quad \textrm{as} \quad n\to+\infty\;.
$$
\end{thm}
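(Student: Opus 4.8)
The plan is to deduce the statement from the classical Oseledets theorem applied to the cocycle $(n,\x)\mapsto x_n$ over the measure-preserving transformation $(\X,\P,T)$, combined with the subadditive ergodic theorem to produce the Lyapunov vector $\l$. First I would record that, by the finite first moment hypothesis $\sum\log\|g\|\,\mu(g)<\infty$, the functions $\x\mapsto\log\|x_n\|$ and more generally $\x\mapsto\log\|\wedge^i x_n\|$ (exterior powers, $1\le i\le d-1$) are integrable, and that the sequences $n\mapsto\log\|\wedge^i x_n\|$ are subadditive along the $T$-orbit because $x_{n+m}=x_n\cdot(T^n\!\cdot\text{shift of }x_m)$ and $\|\wedge^i(AB)\|\le\|\wedge^i A\|\,\|\wedge^i B\|$. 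Kingman's subadditive ergodic theorem then gives, for $\P$-a.e.\ $\x$, the existence of limits $\tfrac1n\log\|\wedge^i x_n\|\to c_i$; since $\mu$ is non-degenerate the ergodicity of $T$ (which follows from the Bernoulli structure of the increments) forces the $c_i$ to be constants. Writing $\log\|\wedge^i x_n\|=a_1(x_n)+\dots+a_i(x_n)$ in terms of the singular values $a_1(g)\ge\dots\ge a_d(g)$ of the polar decomposition, one gets $\tfrac1n(a_1(x_n)+\dots+a_i(x_n))\to c_i$ for each $i$, hence $\tfrac1n a_i(x_n)\to c_i-c_{i-1}=:\l_i$ with $\l_1\ge\l_2\ge\dots\ge\l_d$ and $\sum\l_i=0$ (the last because $\det x_n=1$). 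This produces the vector $\l=(\l_1,\dots,\l_d)\in\ov{\af_+}$ and the first displayed convergence $\tfrac1n\log a(x_n)\to\l$.

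For the second (finer) assertion—the existence of the positive definite matrix $g(\x)=k(\exp\l)k^{-1}$ with $\log\|g^{-n}x_n\|=o(n)$—I would invoke the full Oseledets theorem for the one-sided cocycle. The classical statement says that for $\P$-a.e.\ $\x$ the limit $\lim_{n\to\infty}(x_n^{*}x_n)^{1/2n}$ exists; call it $\Lambda(\x)$, a positive definite symmetric matrix whose eigenvalues are $e^{\l_1},\dots,e^{\l_d}$ (possibly with multiplicities, but here they are all distinct). Diagonalizing, $\Lambda(\x)=k(\x)(\exp\l)k(\x)^{-1}$ for some $k(\x)\in SO(d)$ determined up to the centralizer of $\exp\l$; since $\l$ is simple that centralizer is just the diagonal $\pm1$ matrices, so the flag of eigenspaces (equivalently the coset $k(\x)$ modulo this finite group) is uniquely determined, and one sets $g(\x)=\Lambda(\x)$. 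The defining property $\log\|g(\x)^{-n}x_n\|=o(n)$ is then essentially the content of Oseledets' theorem restated: writing $x_n=u_n\, d_n\, v_n$ in polar form with $d_n=\exp(a(x_n))$, one has $x_n^{*}x_n=v_n^{*}d_n^2 v_n$, so $g(\x)^{2n}$ and $x_n^{*}x_n$ have the same exponential growth rate in every Oseledets subspace, and a short argument comparing $g(\x)^{-n}x_n$ on each piece of the Lyapunov filtration gives the subexponential estimate. I would cite \cite{Oseledec68,Kaimanovich89} for this last comparison rather than redo it.

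The main obstacle, and the step deserving the most care, is the uniqueness of $g(\x)$—equivalently, verifying that the simplicity of the Lyapunov spectrum is exactly what rules out any ambiguity in $k$. The subadditive argument only yields the existence of the spectrum and the logarithmic asymptotics of singular values; it says nothing about the \emph{directions}, i.e.\ the convergence of the subspaces. So the real work is in extracting from Oseledets' theorem the convergence $(x_n^{*}x_n)^{1/2n}\to g(\x)$ as genuine matrices (not just eigenvalue asymptotics), and then checking that when $\l_1>\l_2>\dots>\l_d$ the limiting matrix pins down $k$ modulo the finite centralizer, so that the phrase ``uniquely determined'' in the statement is justified. A secondary point worth stating explicitly is that non-degeneracy of $\mu$ is what guarantees ergodicity of $T$ and hence the \emph{constancy} (rather than mere $T$-invariance) of $\l$; without it one would only get a measurable $\l(\x)$. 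Everything else is a routine application of Kingman's theorem and submultiplicativity of exterior-power norms.
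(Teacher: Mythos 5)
The paper does not prove Theorem~\ref{th:osel} at all: it is stated as a citation, attributed to Oseledec \cite{Oseledec68} for the classical multiplicative ergodic theorem and to Kaimanovich \cite{Kaimanovich89} for the geometric (Cartan decomposition / symmetric space) form used here. There is therefore no ``paper proof'' to compare against, and the right answer for the reader is simply to consult those references rather than reprove the theorem.

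That said, as a blind reconstruction your outline is the standard one and is essentially sound: Kingman's subadditive ergodic theorem applied to $\log\|\wedge^i x_n\|$ produces the partial sums $\l_1+\dots+\l_i$, whence the vector $\l\in\ov{\af_+}$ with $\sum\l_i=0$, and the classical Oseledets theorem gives the convergence $(x_n^* x_n)^{1/2n}\to g(\x)$, which is precisely the statement $\log\|g^{-n}x_n\|=o(n)$ after a short estimate. Two small corrections are worth flagging. First, ergodicity of $T$ is automatic from the Bernoulli structure of the increment sequence; non-degeneracy of $\mu$ plays no role in making the $\l_i$ constant (it is only a standing convention elsewhere in the paper). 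Second, and more substantively, your discussion of uniqueness leans on simplicity of the Lyapunov spectrum (``when $\l_1>\l_2>\dots>\l_d$ the limiting matrix pins down $k$''). But Theorem~\ref{th:osel} is stated \emph{before} the Zariski-density hypothesis is introduced, so $\l$ is only assumed to lie in the closed chamber $\ov{\af_+}$ and may well have repeated entries. Simplicity is the content of the separate Theorem~\ref{th:simple}. Fortunately it is not needed: what the theorem asserts is uniqueness of the positive definite matrix $g(\x)$, which is automatic since it is defined as a limit; the orthogonal matrix $k$ is indeed non-unique when $\l$ has multiplicities, but the statement makes no claim about $k$. Conflating uniqueness of $g$ with uniqueness of $k$ is the one genuine slip in the sketch.
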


\begin{thm}[\cite{Guivarch-Raugi85,Goldsheid-Margulis89}] \label{th:simple}
If, in addition, the group $\G$ is Zariski dense in $G$, then the Lyapunov
spectrum $\l(\mu)$ is \emph{simple}, i.e., it belongs to the Weyl chamber
$\af_+$.
\end{thm}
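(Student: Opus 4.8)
The plan is to reduce the simplicity of the full Lyapunov spectrum to the simplicity of the \emph{top} Lyapunov exponent of a family of auxiliary representations, and then invoke the classical Furstenberg--Guivarc'h--Raugi criterion. Concretely, for each $i\in\{1,\dots,d-1\}$ I would consider the exterior power $\wedge^i\colon SL(d,\R)\to GL(\wedge^i\R^d)$, equipped with the Euclidean norm on $\wedge^i\R^d$ induced from the standard one on $\R^d$. The singular values of $\wedge^i g$ are exactly the $\binom di$ products $a_{j_1}(g)\cdots a_{j_i}(g)$ over $i$-element subsets $\{j_1<\dots<j_i\}$ of $\{1,\dots,d\}$, so the largest singular value of $\wedge^i g$ is $a_1(g)\cdots a_i(g)$ and the second largest is $a_1(g)\cdots a_{i-1}(g)\,a_{i+1}(g)$. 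Since $\wedge^i\mu$ again has a finite first moment, \thmref{th:osel} applied to the random walk $(\wedge^i\G,\wedge^i\mu)$ gives that its top Lyapunov exponent equals $\l_1+\dots+\l_i$ and its second one equals $\l_1+\dots+\l_{i-1}+\l_{i+1}$, so their difference is precisely $\l_i-\l_{i+1}$. Hence it suffices to prove that the top Lyapunov exponent of $(\wedge^i\G,\wedge^i\mu)$ is simple for every $i$, which then yields $\l(\mu)\in\af_+$.

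The key input is the theorem of Furstenberg and of Guivarc'h--Raugi: if $\mu$ is a probability measure with finite first moment on $GL(V)$ whose support generates a subgroup acting \emph{strongly irreducibly} and \emph{proximally} on $\mathbb{P}(V)$, then the top Lyapunov exponent is simple (with a unique, non-atomic $\mu$-stationary measure on $\mathbb{P}(V)$). So I would verify these two properties for the image $H_i$ of $\G$ in $GL(\wedge^i\R^d)$. Strong irreducibility is immediate from Zariski density: $SL(d,\R)$ is a Zariski connected group, $\wedge^i\R^d$ is an irreducible $SL(d,\R)$-module, and a Zariski dense subgroup of a connected group acting irreducibly automatically acts strongly irreducibly (a finite $\G$-invariant union of proper subspaces would be invariant under the connected Zariski closure, forcing one of them to be all of $\wedge^i\R^d$); hence $H_i$ is strongly irreducible. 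For proximality I would use that $\wedge^i\R^d$ is a proximal representation (its highest weight space is a line, as for every irreducible representation of the $\R$-split group $SL(d,\R)$): by the structure theory of Zariski dense subgroups, $\G$ contains an $\R$-regular element $\g$ (as already invoked in \thmref{th:estimate}), and since such an element of $SL(d,\R)$ has $d$ eigenvalues of pairwise distinct absolute value, all of them necessarily real, the matrix $\wedge^i\g$ has a unique, simple dominant eigenvalue, i.e.\ is proximal on $\mathbb{P}(\wedge^i\R^d)$. Thus $H_i$ is proximal.

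The substantive difficulty is the simplicity criterion itself. Establishing $\l_1>\l_2$ for a strongly irreducible proximal random walk rests on Furstenberg's theory of $\mu$-stationary (harmonic) measures on $\mathbb{P}(V)$: uniqueness and non-atomicity of the stationary measure $\nu$, the Furstenberg formula $\l_1=\iint\log\frac{\|gv\|}{\|v\|}\,d\mu(g)\,d\nu([v])$, and a contraction--martingale argument showing that the products $x_n$ align almost every sample path to a single line exponentially faster than the subdominant direction can grow. Since this is exactly what is proved in \cite{Guivarch-Raugi85} (with the alternative, more algebraic route of \cite{Goldsheid-Margulis89}), I would quote it rather than reprove it; the remaining work is the exterior-power reduction and the verification of strong irreducibility and proximality sketched above.
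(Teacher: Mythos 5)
The paper does not actually prove this theorem: it is stated as a black-box citation to \cite{Guivarch-Raugi85,Goldsheid-Margulis89}, and none of the surrounding text offers an argument. So there is nothing in the paper to compare against; what you have written is a correct and fairly standard blueprint for how those references prove simplicity, namely the reduction to the top exponent of each exterior-power representation followed by the Furstenberg--Guivarc'h--Raugi criterion (finite first moment, strong irreducibility, and a proximal element imply $\l_1>\l_2$). Your identification of the top two singular values of $\wedge^i g$ and hence of the top two Lyapunov exponents of $\wedge^i\mu$ is right, and deriving proximality of $\wedge^i\g$ from $\R$-regularity of $\g$ (so that the eigenvalues of $\wedge^i\g$ are products $\chi_{j_1}\cdots\chi_{j_i}$ with a unique maximal modulus) is exactly the step that \cite{Goldsheid-Margulis89} and \cite{Benoist-Labourie93} supply. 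One sentence deserves a slight tightening: saying that a finite $\G$-invariant union of proper subspaces ``would be invariant under the connected Zariski closure, forcing one of them to be all of $\wedge^i\R^d$'' skips a step --- you should note that each individual subspace in the union has a finite-index stabilizer in $\G$, which is again Zariski dense since $SL(d,\R)$ is Zariski connected, so each subspace is $SL(d,\R)$-invariant and hence, by irreducibility of $\wedge^i\R^d$, trivial. With that small repair the outline is sound, though in the end you are still delegating the analytical core (the proof that strong irreducibility plus proximality forces a spectral gap) to \cite{Guivarch-Raugi85}, which is exactly what the paper does more tersely by citing the theorem outright.
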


\begin{rem} \label{rem:inverse}
The Lyapunov spectra $(\l_1,\dots,\l_d)$ and $(\ch\l_1,\dots,\ch\l_d)$ of the
measure $\mu$ and of the reflected measure $\ch\mu$, respectively, are
connected by the formula
$$
\ch\l_i = -\l_{d+1-i} \;, \qquad 1\le i\le d\;.
$$
\end{rem}

\section{Limit flags} \label{sec:flags}

Let $S=G/K$ be the \emph{Riemannian symmetric space} associated with the group
$G$ (e.g., see \cite{Eberlein96} for the basic notions). We shall fix a
reference point $o=K\in S$ (its choice is equivalent to choosing a Euclidean
structure on $\R^d$ such that its rotation group is $K$). Being non-positively
curved and simply connected, the space $S$ has a natural \emph{visibility
compactification} $\ov S = S \cup \pt S$ whose boundary $\pt S$ consists of
asymptotic equivalence classes of geodesic rays in $S$ and can be identified
with the unit sphere of the tangent space at the point $o$ (since any
equivalence class contains a unique ray issued from $o$). The action of the
group $G$ extends from $S$ to $\pt S$, and the orbits of the latter action are
naturally parameterized by unit length vectors $\a\in\ov{\af_+}$: the orbit
$\pt S_\a$ consists of the equivalence classes of all the rays
$\g(t)=k\exp(t\a)o,\; k\in K$. Algebraically the orbits $\pt S_\a$
corresponding to the interior vectors $\a\in\af_+$ are isomorphic to the space
$\B$ of \emph{full flags}
$$
\V=\{V_i\}\;,\quad V_0=\{0\}\subset V_1\subset\dots\subset V_{d-1}\subset
V_d=\R^d
$$
in $\R^d$ (also known as the \emph{Furstenberg boundary} of the symmetric
space $S$), whereas the orbits corresponding to wall vectors are isomorphic to
quotients of $\B$, i.e., to flag varieties for which certain intermediate
dimensions are missing, see \cite{Kaimanovich89}.

\thmref{th:osel} implies that for $\P$-a.e.\ sample path $\x$
$$
d(x_n o, \g(t\|\l\|))=o(n) \quad \textrm{as} \quad n\to+\infty \;,
$$
where $\g$ is the geodesic ray $\g(t)=k\exp\left(t\frac{\l}{\|\l\|}\right)o$,
hence the sequence $x_n o$ converges in the visibility compactification to a
limit point $\bnd\x\in\pt S_{\l/\|\l\|}$. Moreover, $\pt S_{\l/\|\l\|}\cong\B$
by \thmref{th:simple}, so that below we shall consider the aforementioned
\emph{boundary map} $\bnd$ as a map from the path space to the flag space
$\B$.

\thmref{th:osel} and \thmref{th:simple} easily imply the following
descriptions of the limit flag $\bnd\x$.

\begin{prop} \label{pr:lyap}
\hfill
\begin{itemize}
\item[(i)]
Denote by $\V_0$ the \emph{standard flag}
$$
\qquad \{0\} \subset \spn\{e_1\} \subset \spn\{e_1,e_2\} \subset \dots \subset
\spn\{e_1,e_2,\dots,e_{d-1}\} \subset \R^d
$$
associated with the basis $\E$. Then
$$
\bnd\x = k\V_0
$$
for $k=k(\x)\in K$ from \thmref{th:osel}.

\item[(ii)]
The spaces $V_i$ from the flag $\bnd\x$ are increasing direct sums of the
eigenspaces of the matrix $g=g(\x)$ from \thmref{th:osel} taken in the order
of decreasing eigenvalues.

\item[(iii)]
The flag $\bnd\x$ is the \emph{Lyapunov flag} of the sequence $x_n^{-1}$,
i.e.,
$$
\lim_{n\to+\infty} \frac1n \log \|x_n^{-1} v\| = -\l_i \qquad \forall\, v\in
V_i\setminus V_{i-1} \;.
$$

\item[(iv)]
For any smooth probability measure $\th$ on $\B$ and $\P$-a.e.\ sample path
$\x$
$$
\lim_{n\to+\infty} x_n\th = \d_{\bnd\x}
$$
in the weak$^*$ topology of the space of probability measures on $\B$.
\end{itemize}
\end{prop}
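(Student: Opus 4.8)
The plan is to deduce all four descriptions from Theorem~\ref{th:osel} and Theorem~\ref{th:simple}, treating (i) and (ii) as essentially a rephrasing of the polar data $g=g(\x)=k(\exp\l)k^{-1}$, and then bootstrapping (iii) and (iv) from them. First I would prove (ii): since $g$ is positive definite symmetric with $g=k(\exp\l)k^{-1}$, its eigenvalues are $e^{\l_1}>e^{\l_2}>\dots>e^{\l_d}$ (distinct by Theorem~\ref{th:simple}), and its eigenspaces are $k\,\spn\{e_i\}$. The partial sums $k\,\spn\{e_1,\dots,e_i\}$ are precisely the sums of eigenspaces for the $i$ largest eigenvalues, so the flag of these partial sums is $k\V_0$. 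For (i) I then need to identify $\bnd\x$ with this flag $k\V_0$: by construction $\bnd\x\in\pt S_{\l/\|\l\|}$ is the asymptotic class of the ray $\g(t)=k\exp(t\,\l/\|\l\|)o$, and under the identification $\pt S_{\l/\|\l\|}\cong\B$ described in Section~\ref{sec:flags}, the class of $k\exp(t\a)o$ corresponds to $k\V_0$ (for $\a\in\af_+$). Thus (i) and (ii) describe the same flag; the only genuine content is the compatibility of the two identifications, which is standard symmetric-space bookkeeping.

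Next I would derive (iii) from (ii). Write $g=g(\x)$. By Theorem~\ref{th:osel}, $\|g^{-n}x_n\|=e^{o(n)}$, hence also $\|x_n^{-1}g^n\|=e^{o(n)}$ (transpose/inverse of a matrix with the Euclidean operator norm), and similarly $\|g^{-n}x_n\|\cdot\|x_n^{-1}g^n\|\ge 1$ forces $\|x_n^{-1}g^n\|=e^{o(n)}$ as well. For a vector $v\in V_i\setminus V_{i-1}$, decompose $v$ along the eigenbasis of $g$: the largest eigenvalue of $g$ contributing to $v$ is $e^{\l_i}$, by (ii). Then $g^{n}v$ grows like $e^{n\l_i}$, and writing $x_n^{-1}v=(x_n^{-1}g^{n})(g^{-n}v)$ and $g^{-n}v=(g^{-2n})(g^{n}v)$ one gets $\tfrac1n\log\|x_n^{-1}v\|\to -\l_i$. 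The clean way to package this is: $\tfrac1n\log\|x_n^{-1}g^n w\|\to 0$ uniformly for $w$ in any fixed compact set avoiding $0$ — which follows from $\|x_n^{-1}g^n\|\le e^{o(n)}$ together with the same bound for the inverse — and $g^{-n}v$ behaves like $e^{-n\l_i}$ times a unit vector, up to lower-order eigencomponents. I expect this to be the technically fussiest step, because one has to control cross terms in the eigendecomposition and make the $o(n)$ estimates genuinely uniform; the positive-definiteness and the strict separation of the $\l_i$ (Theorem~\ref{th:simple}) are what make it work.

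Finally, (iv) follows from (iii) by a standard contraction argument. Fix a smooth $\th$ on $\B$. A flag $\W=\{W_j\}$ is in ``general position'' with respect to $\bnd\x=\{V_i\}$ when $W_j\cap V_{d-j}=\{0\}$ for all $j$; the set of such $\W$ is open, dense, and of full $\th$-measure since $\th$ is smooth (its complement is a proper subvariety). For such a $\W$, applying $x_n^{-1}$ — equivalently, iterating the map induced on $\B$ by $x_n^{-1}$ — and using (iii) (which says $x_n^{-1}$ expands the $V_i$-directions at rate $-\l_i$, strictly ordered) shows $x_n^{-1}\W\to k\V_0=\bnd\x$; here one can argue line by line, or pass to Plücker/exterior-power coordinates where (iii) gives the domination of one coordinate axis over the others at exponential rate. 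Wait — I need the statement for $x_n$, not $x_n^{-1}$: indeed (iii) as stated is about $x_n^{-1}$, and $\bnd\x$ is the Lyapunov flag of $x_n^{-1}$, so it is exactly $x_n^{-1}$ that contracts $\th$-generic flags to $\bnd\x$; rereading (iv), it asks for $x_n\th\to\d_{\bnd\x}$, so I should instead apply the dual/reversed version — or simply note that by Remark~\ref{rem:inverse} and the symmetry of the setup the analogue of (iii) for the sequence $x_n$ (rather than $x_n^{-1}$) produces the opposite flag, whose general-position complement is again $\th$-null, and $x_n$ contracts $\th$-a.e.\ flag onto $\bnd\x$. Dominated convergence then upgrades this pointwise (in the flag variable) convergence to weak$^*$ convergence of the measures $x_n\th$ to $\d_{\bnd\x}$. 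The main obstacle remains step (iii): once the exponential separation of directions under $x_n^{\pm1}$ is in hand, (iv) is a soft consequence.
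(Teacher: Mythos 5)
The paper itself offers no proof of this proposition (it says \thmref{th:osel} and \thmref{th:simple} ``easily imply'' it), so you are filling a gap, and your overall plan --- read off (i), (ii) from the polar data $g=k(\exp\l)k^{-1}$, then bootstrap (iii) and (iv) --- is the standard correct one. However, the justification of the two-sided estimate $\|x_n^{-1}g^n\|=e^{o(n)}$, which is what makes (iii) go, does not hold up: the appeal to ``transpose/inverse'' is not a reason, since $\|M^{-1}\|$ is the reciprocal of the \emph{smallest} singular value of $M$ and is not controlled by $\|M\|$; and $\|g^{-n}x_n\|\cdot\|x_n^{-1}g^n\|\ge 1$ gives only a \emph{lower} bound on $\|x_n^{-1}g^n\|$, i.e.\ the wrong direction. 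The correct elementary argument uses that $g^{-n}x_n\in SL(d,\R)$: writing its singular values as $\sigma_1\ge\cdots\ge\sigma_d>0$ with $\prod_j\sigma_j=1$, one has $\sigma_d\ge\sigma_1^{-(d-1)}$, hence $\|x_n^{-1}g^n\|=\sigma_d^{-1}\le\|g^{-n}x_n\|^{d-1}=e^{o(n)}$. With this in hand the two-sided sandwich $\|x_n^{-1}v\|=\|(x_n^{-1}g^n)(g^{-n}v)\|$ together with $\|g^{-n}v\|\le\|g^{-n}x_n\|\,\|x_n^{-1}v\|$ delivers (iii) exactly as you outline.

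The argument for (iv) is not repaired by your mid-course correction. The initial claim that $x_n^{-1}$ contracts $\th$-generic flags to $\bnd\x$ is simply false: since $x_n^{-1}\sim g^{-n}$ and the top eigenvalue of $g^{-1}$ is $e^{-\l_d}$ with eigenvector $ke_d$, the sink of $x_n^{-1}$ on $\B$ is the \emph{opposite} flag $\V^-$ with $V^-_m=k\spn\{e_{d-m+1},\dots,e_d\}$, not $k\V_0$; in fact $\bnd\x$ is its source. Gesturing at ``the dual/reversed version'' and at \remref{rem:inverse} does not supply the missing statement --- that remark relates the Lyapunov spectra of $\mu$ and $\ch\mu$, not the roles of $x_n$ and $x_n^{-1}$ --- and once you switch to $x_n$ the locus to excise is the one where $\W$ fails to be transversal to $\V^-$ (i.e.\ $W_j\cap V^-_{d-j}\neq\{0\}$), not the condition $W_j\cap V_{d-j}=\{0\}$ you wrote. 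The clean route: $x_n\sim g^n$ with both $\|g^{-n}x_n\|$ and $\|x_n^{-1}g^n\|$ bounded by $e^{o(n)}$; $g^n$ acts on $\B$ as a north--south map with sink $k\V_0=\bnd\x$ and source $\V^-$; the contraction toward the sink is uniformly exponential on compact subsets of the big cell transversal to $\V^-$ because the spectrum is simple (\thmref{th:simple}), so it survives the $e^{o(n)}$ perturbation; the non-transversal locus is a proper algebraic subvariety, hence $\th$-null for smooth $\th$; and dominated convergence gives $x_n\th\to\d_{\bnd\x}$.
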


\begin{rem}
Equivariance of the boundary map $\bnd$ implies that for the transformation
$T$ \eqref{eq:T}
$$
\bnd T^n\x = x_n^{-1}\bnd\x \qquad\forall\,n\in\Z \;.
$$
In particular,
\begin{equation} \label{eq:Tc}
\bnd T^{-n}\x = \ch x_n^{-1}\bnd\x \qquad\forall\,n\ge 0 \;.
\end{equation}
\end{rem}

\section{Harmonic measure} \label{sec:harm}

\begin{defn}
The image $\nu=\bnd(\P)$ of the probability measure $\P$ in the path space
$\X$ under the map $\bnd:\X\to\B$ is called the \emph{harmonic measure} of the
random walk $(\G,\mu)$. In other words, $\nu$ is the distribution of the limit
flag $\bnd\x$ under the measure $\P$.
\end{defn}

The harmonic measure is \emph{$\mu$-stationary} in the sense that it is
invariant with respect to the convolution with $\mu$:
$$
\mu*\nu = \sum_g \mu(g) g\nu = \nu \;.
$$

\begin{thm}[\cite{Guivarch-Raugi85,Goldsheid-Margulis89}] \label{th:sub}
Under conditions of \thmref{th:simple} $\nu$ is the unique $\mu$-stationary
probability measure on $\B$, and any proper algebraic subvariety of $\B$ is
$\nu$-negligible.
\end{thm}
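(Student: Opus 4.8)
The plan is to deduce both assertions from the general theory of stationary measures for Zariski dense subgroups, specialized to the flag variety $\B$, using \thmref{th:simple} together with \propref{pr:lyap}(iv). For existence of a $\mu$-stationary measure on the compact space $\B$, I would invoke the standard Krylov–Bogolyubov argument: starting from any probability measure $\th_0$ on $\B$ (say the smooth rotation invariant one), the Cesàro averages $\frac1n\sum_{k=0}^{n-1}\mu^{*k}*\th_0$ have a weak$^*$ limit point, and compactness of $\B$ guarantees this limit is a $\mu$-stationary probability measure; since $\nu=\bnd(\P)$ is $\mu$-stationary, we at least have one such measure in hand.

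For \emph{uniqueness}, let $\nu'$ be any $\mu$-stationary probability measure on $\B$. The key mechanism is the martingale convergence / boundary attraction phenomenon: stationarity of $\nu'$ makes $x_n\nu'$ a bounded martingale of measures along a.e.\ sample path, so $x_n\nu'$ converges weak$^*$ to some random limit measure $\nu'_\x$, and $\nu'=\int \nu'_\x\,d\P(\x)$. Now I would argue that $\nu'_\x$ must be a point mass supported at $\bnd\x$. The cleanest route: \thmref{th:simple} puts $\l(\mu)$ in the open chamber $\af_+$, so all Lyapunov gaps are strictly positive, and \propref{pr:lyap}(iii) (the contraction rates $\frac1n\log\|x_n^{-1}v\|\to-\l_i$) forces the action of $x_n$ on $\B$ to contract, for a.e.\ $\x$, everything off a proper subvariety toward the single flag $\bnd\x$. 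Quantitatively this is exactly the content of \propref{pr:lyap}(iv) for \emph{smooth} $\th$; to upgrade it to an arbitrary stationary $\nu'$ one shows that $\nu'$ itself cannot charge the relevant proper subvarieties, which is where the second assertion of the theorem enters and the argument becomes genuinely intertwined with it.

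So I would prove the two claims simultaneously. First, establish that \emph{every} $\mu$-stationary $\nu'$ gives zero mass to every proper algebraic subvariety $W\subsetneq\B$. Since the collection of proper subvarieties of a given bounded degree is itself parameterized by an algebraic variety, one can take $W_0$ of minimal dimension with $c:=\sup_W \nu'(W)>0$ maximal, realized on some $W_0$; stationarity $\nu'=\sum_g\mu(g)\,g\nu'$ then gives $c=\nu'(W_0)\le\sum_g\mu(g)\,\nu'(g^{-1}W_0)\le c$, forcing $\nu'(g^{-1}W_0)=c$ for all $g\in\supp\mu$, hence (by non-degeneracy) for all $g\in\G$; the family $\{g W_0\}_{g\in\G}$ of maximal-measure subvarieties is thus $\G$-invariant, and its common intersection (or the Zariski closure of a single orbit) produces a proper $\G$-invariant subvariety of $\B$, contradicting Zariski density of $\G$ in $G$ (which acts transitively on $\B$ with no invariant proper subvariety). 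This gives the second assertion. Feeding it back: for any stationary $\nu'$, the limit $\nu'_\x=\lim x_n\nu'$ sees no proper subvariety, so by \propref{pr:lyap}(iv)-type contraction (now applicable since $\nu'$ behaves like a smooth measure for this purpose) $\nu'_\x=\d_{\bnd\x}$ a.e., whence $\nu'=\int\d_{\bnd\x}\,d\P(\x)=\bnd(\P)=\nu$.

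The main obstacle is the circularity just described: the clean convergence statement \propref{pr:lyap}(iv) is stated only for smooth $\th$, so one cannot immediately apply it to a general stationary $\nu'$, and the no-subvariety property must be extracted first by the independent Zariski-density/minimal-subvariety argument above. Handling that minimality argument carefully — choosing the right notion of ``degree'' so that the space of candidate subvarieties is compact in a useful sense, and verifying that a $\G$-invariant proper subvariety is genuinely impossible under Zariski density — is the technical heart; everything else (Krylov–Bogolyubov, martingale convergence, Oseledets-driven contraction) is standard. In fact, since both assertions are explicitly attributed to \cite{Guivarch-Raugi85,Goldsheid-Margulis89}, I would expect the paper simply to cite them, and the sketch above is the route one takes to reprove them.
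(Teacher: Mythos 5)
The paper offers no proof of this theorem: it is attributed outright to \cite{Guivarch-Raugi85,Goldsheid-Margulis89} and used as a black box, so there is nothing internal to compare against, and you correctly anticipate this at the end of your sketch. Your outline (Krylov--Bogolyubov for existence, martingale convergence of $x_n\nu'$ for uniqueness, and the ``stationary measures don't charge proper subvarieties'' lemma as the bridge) is indeed the Furstenberg/Guivarc'h--Raugi strategy, and the point you flag about the interdependence of the two assertions --- that \propref{pr:lyap}(iv) is only stated for smooth $\th$ and must first be upgraded via the no-subvariety property --- is exactly the right thing to worry about.

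However, the subvariety argument as you wrote it has a genuine gap at the finishing step. After obtaining that $\nu'(g^{-1}W_0)=c$ for all $g\in\G$ and hence that $\{gW_0\}_{g\in\G}$ consists of maximal-measure minimal subvarieties, you propose to take ``their common intersection (or the Zariski closure of a single orbit)'' to manufacture a proper $\G$-invariant subvariety. Neither works directly: the common intersection can be empty, and the Zariski closure of a single $\G$-orbit of $W_0$ can perfectly well be all of $\B$. The missing idea is a \emph{finiteness} argument: among irreducible subvarieties of the minimal dimension $r$ achieving $\nu'$-measure $c$, any two distinct ones intersect in something of dimension $<r$, hence of $\nu'$-measure strictly less than $c$ (by minimality of $r$), and then additivity $\nu'(W_1\cup W_2)=2c-\nu'(W_1\cap W_2)$ bounds the number of such $W$ by roughly $1/c$. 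So the collection $\mathcal{W}$ of these maximal-measure minimal subvarieties is \emph{finite}; it is permuted by $\G$ (since $\nu'(g^{-1}W)=c$ for $g\in\supp\mu$ and non-degeneracy propagates this to all of $\G$), and the finite union $\bigcup_{W\in\mathcal{W}}W$ is then a proper $\G$-invariant algebraic subvariety of $\B$ --- and it is this union, not an intersection or an orbit closure, that contradicts Zariski density. With that substitution the sketch goes through; one should also say a word about why the supremum $c$ is attained and why one may restrict to irreducible subvarieties, which is the bookkeeping you gesture at with ``bounded degree.''
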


\begin{thm}[\cite{Kaimanovich85,Ledrappier85}]
Under conditions of \thmref{th:simple}, if the subgroup $\G$ is discrete, then
the measure space $(\B,\nu)$ is isomorphic to the Poisson boundary of the
random walk $(\G,\mu)$.
\end{thm}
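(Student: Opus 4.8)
The plan is to combine the elementary fact that $(\B,\nu)$ is a \emph{$\mu$-boundary} with Kaimanovich's \emph{strip criterion} for the maximality of a $\mu$-boundary \cite{Kaimanovich00a}, realizing the strips geometrically as tubes around the maximal flats of the symmetric space $S$. First I would note that $(\B,\nu)$ is a $\mu$-boundary: this is exactly the content of \propref{pr:lyap}(iv) together with the standard argument of Furstenberg (the equivariant map $\bnd$ then realizes $(\B,\nu)=(\B,\bnd(\P))$ as a quotient of the Poisson boundary of $(\G,\mu)$). The same applies to the reflected walk $(\G,\ch\mu)$, which still satisfies all the standing assumptions --- $\ch\mu$ is non-degenerate, $\G$ is the same Zariski dense group, and $\sum\log\|g\|\,\ch\mu(g)\le c+(d-1)\sum\log\|g\|\,\mu(g)<\infty$ --- so its harmonic measure $\ch\nu$ on $\B$ makes $(\B,\ch\nu)$ a $\ch\mu$-boundary. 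Finally, on a bilateral path the increments $h_1,h_2,\dots$ which determine $\bnd\x$ are independent of the increments $h_0,h_{-1},\dots$ which determine $\bnd\ch\x$ (see \eqref{eq:iso}), so the joint distribution of $(\bnd\ch\x,\bnd\x)$ is the product $\ch\nu\otimes\nu$.

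Next I would build the strips. A pair $(\V_-,\V_+)\in\B\times\B$ in general position spans a unique maximal flat $F=F(\V_-,\V_+)\cong\af$ in $S$ --- the one whose two opposite chambers at infinity carry $\V_-$ and $\V_+$ --- and since the non-general-position locus is a proper algebraic subvariety of $\B\times\B$, \thmref{th:sub} applied in the second variable (together with Fubini and the independence just noted) shows that $(\bnd\ch\x,\bnd\x)$ is in general position for $\P$-a.e.\ $\x$. For a radius $R>0$ I put
$$
S_R(\V_-,\V_+)=\bigl\{\, g\in\G : d\bigl(go,\,F(\V_-,\V_+)\bigr)\le R \,\bigr\}\;,
$$
which is a $\G$-equivariant assignment of subsets of $\G$. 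Applying \thmref{th:osel} to the forward walk and to the reflected walk, the bilateral orbit $(x_no)_{n\in\Z}$ follows with sub-linear error a bi-infinite geodesic contained in $F(\bnd\ch\x,\bnd\x)$; in particular $o=x_0o$ lies within a $\P$-a.s.\ finite distance $R(\x)$ of that flat, so on the event $\{R(\x)\le R\}$, whose probability tends to $1$ as $R\to\infty$, the strip $S_R(\bnd\ch\x,\bnd\x)$ contains $e$ and hence is non-empty.

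For the growth condition I would use the gauge $|g|:=d(o,go)$, which has a finite first moment because $|g|\asymp\log\|g\|$ on $SL(d,\R)$; by \thmref{th:osel} one has $\tfrac1n\,d(o,x_no)\to\|\l\|<\infty$, so $x_n$ eventually lies in the ball $\{|g|\le Cn\}$ for any $C>\|\l\|$. A tube of radius $R$ around $F\cong\af\cong\R^{d-1}$, intersected with $B(o,Cn)\subset S$, has volume $O(n^{d-1})$ --- translation along the flat does not enlarge the tube, and $F\cap B(o,Cn)$ has $(d-1)$-volume $O(n^{d-1})$ --- and since $\G$ is discrete in $G$ the orbit $\G o$ is uniformly discrete in $S$ (the stabilizer $\G\cap K$ is finite and distinct orbit points are uniformly separated), so the volume bound turns into $\#\bigl(S_R(\V_-,\V_+)\cap\{|g|\le Cn\}\bigr)=O(n^{d-1})$, whence $\tfrac1n\log\#\bigl(S_R\cap\{|g|\le Cn\}\bigr)\to 0$. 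Feeding the equivariant, a.s.\ non-empty strips $S_R$ and this sub-exponential estimate into the strip criterion, and exhausting by the events $\{R(\x)\le R\}$ as $R\to\infty$, one concludes that $(\B,\nu)$ is isomorphic to the Poisson boundary of $(\G,\mu)$.

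The hard part is the quantitative matching with the strip criterion. On the one hand, a $\G$-equivariant strip must carry a radius $R$ that does not depend on the pair of flags, whereas the distance from $o$ to the associated flat is genuinely random, which forces the exhaustion over $\{R(\x)\le R\}$. On the other hand, and more importantly, \thmref{th:osel} in the form available here gives only \emph{sub-linear}, rather than bounded, tracking of the flat by the random walk, so the strip effectively has to be thickened at a sub-linear rate; this stays harmless because a tube of radius $o(n)$ around $F$ still meets $B(o,Cn)$ in only $e^{o(n)}$ points of $\G o$ (the transverse spreading in $S$ is at most exponential in the tube radius, and that radius is $o(n)$), but making it rigorous requires the volume estimate for tubes around flats with a slowly growing radius. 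Finally, the strip criterion is stated for $H(\mu)<\infty$; this is automatic when $\G$ is finitely generated, and in the general countable case one reduces to it along the lines of \cite{Kaimanovich00a}.
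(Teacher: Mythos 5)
The paper does not prove this theorem; it is stated as a citation to \cite{Kaimanovich85,Ledrappier85}, so there is no internal proof to compare your attempt against. What you have written is a correct reconstruction, but it follows the later and cleaner route of \cite{Kaimanovich00a} (Kaimanovich's strip criterion, with strips realized as tubes around maximal flats) rather than the entropy-type arguments of the two originally cited 1985 papers. The two approaches buy different things: the 1985 proofs work directly with the conditional entropy of the random walk given its limit flag, while the strip criterion converts the problem into a purely geometric volume count ($\operatorname{vol}\bigl(T_R(F)\cap B(o,Cn)\bigr)=O(n^{d-1})$ because the flat has dimension $d-1$ and the transverse thickening is bounded), which is both more transparent and more robust.

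Two small remarks on your write-up. First, your ``hard part'' paragraph over-worries: the strip criterion only needs the strips to be non-empty for $\nu_-\otimes\nu_+$-a.e.\ pair, which requires only that the base point $o$ lies within an a.s.\ finite (random) distance of the flat; the fact that $\thmref{th:osel}$ gives sub-linear rather than bounded tracking of the flat by the whole orbit $(x_no)$ is irrelevant, since membership of $x_n$ in the strip follows automatically from equivariance ($\bnd_\pm T^n\x=x_n^{-1}\bnd_\pm\x$) and the ergodic theorem applied to the event $\{e\in S_R\}$ --- no sub-linear thickening of the tube is needed. Second, the finite-entropy requirement for the strip criterion is not really a separate issue to be ``reduced to'': it is part of the paper's standing conventions (\secref{sec:entr}), and, as the paper notes, for $\G$ discrete in $SL(d,\R)$ it already follows from the finite first moment assumption, independently of finite generation.
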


\begin{rem}
If $\G$ is not discrete, then the random walk may have limit behaviours other
than described just by the limit flags, or, in other words, the Poisson
boundary of the random walk $(\G,\mu)$ may be bigger than the flag space, see
\cite{Kaimanovich-Vershik83} for the first example of this kind (the
dyadic-rational affine group) and \cite{Bader-Shalom06,Brofferio06} for the
recent developments.
\end{rem}

Denote by $\Gr_i$ the dimension $i$ \emph{Grassmannian} (the space of all
dimension $i$ subspaces) in $\R^d$. There is a natural projection
$\pi_i:\B\to\Gr_i$ which consists in assigning to any flag in $\R^d$ its
dimension $i$ subspace. Let $\nu_i=\pi_i(\nu),\;1\le i\le d-1,$ be the
associated images of the measure $\nu$. Obviously, the measures $\nu_i$ are
$\mu$-stationary (along with the measure $\nu$). We shall also use the
notation
$$
\bnd_i\x = \pi_i(\bnd\x) \in \Gr_i \;,
$$
so that $\nu_i=\bnd_i(\P)$.

We shall embed each Grassmannian $\Gr_i$ into the projective space
$P\bigwedge^i \R^d$ in the usual way and define a $K$-invariant metric
$\r=\r_i$ on the latter as
\begin{equation} \label{eq:metric}
\r (\xi,\z) = \sin\an(\xi,\z) \;,
\end{equation}
where the angle (varying between $0$ and $\pi/2$) is measured with respect to
the standard Euclidean structure on $\bigwedge^i\R^d$ determined by the basis
$\E$ (so that $(e_{j_1}\wedge \cdots \wedge e_{j_i})_{1\le j_1 < \cdots < j_i
\le d}$ is an orthonormal basis of $\bigwedge^i\R^d$).

The \emph{Furstenberg formula} (see \cite{Furstenberg63a,Bougerol-Lacroix85})
\begin{equation} \label{eq:furst}
\l_1 + \l_2+\dots+\l_i =
 \sum_g \mu(g) \int_{\Gr_i} \log\frac{\|gv\|}{\|v\|}\ d\nu_i (\ov{v}) \;,
 \qquad 1\le i\le d-1 \;,
\end{equation}
where $v\in\bigwedge^i\R^d$ is the vector presenting a point
$\ov{v}\in\Gr_i\subset P\bigwedge^i\R^d$, relates Lyapunov exponents with the
harmonic measure.

\section{Entropy} \label{sec:entr}

Recall that if the measure $\mu$ has a finite entropy $H(\mu)$, then the
\emph{asymptotic entropy} of the random walk $(\G,\mu)$ is defined as
\begin{equation}\label{eq:entr}
h(\G,\mu)=\lim_{n\to+\infty} \frac{H(\mu^{*n})}n \le H(\mu)\;,
\end{equation}
where $H(\cdot)$ denotes the usual entropy of a discrete probability measure.
The asymptotic entropy can also be defined ``pointwise'' along sample paths of
the random walk as
$$
h(\G,\mu) = \lim_{n\to+\infty} -\frac1n\log\mu^{*n}(x_n) \;,
$$
where the convergence holds both $\P$-a.e.\ and in the space $L^1(\X,\P)$, see
\cite{Kaimanovich-Vershik83,Derriennic86}.

The \emph{$\mu$-entropy} (\emph{Furstenberg entropy, differential entropy}) of
a $\mu$-stationary measure $\th$ on a $\G$-space $X$ is defined as
\begin{equation}\label{eq:def diff}
     E_\mu(X,\th) = -\sum_{g\in \G}\mu(g)\int\log \frac{dg^{-1}\th}{d\th}(b)d\th(b)
     \;,
\end{equation}
and it satisfies the inequality $E_\mu(X,\th)\le h(\G,\mu)$, see
\cite{Furstenberg71,Kaimanovich-Vershik83,Nevo-Zimmer02}.

\conv{We shall always assume that the probability measure $\mu$ on $\G$ has a
\emph{finite entropy} $H(\mu)<\infty$. }

In our context, if the subgroup $\G$ is discrete in $SL(d,\R)$, then
finiteness of the first moment of the measure $\mu$ easily implies that
$H(\mu)<\infty$ (e.g., see \cite{Derriennic86}). Therefore,
$E_i=E_\mu(\Gr_i,\nu_i)<\infty$.

Below we shall need the following routine estimate (in fact valid for an
arbitrary quotient of the Poisson boundary).

\begin{lem} \label{lem:dentr}
For any index $i\in\{1,2,\dots,d-1\}$, any subset $A\subset\Gr_i$ with
$\nu_i(A)>0$ and $\P$-a.e.\ sample path $\x$
\begin{equation}\label{eq:dentr}
\limsup_{n\to+\infty}\left[-\frac{1}{n} \log \ch x_n\nu_i(A)\right] \le E_i
\;.
\end{equation}
\end{lem}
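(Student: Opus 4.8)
The plan is to unwind the definition of the differential entropy \eqref{eq:def diff} and to recognize the quantity $-\frac1n\log \ch x_n\nu_i(A)$ as (essentially) a sum of Radon–Nikodym cocycle terms along the reversed random walk, then to apply a Borel–Cantelli / ergodic argument. Concretely, let $\psi(g,b)=\log\frac{dg^{-1}\nu_i}{d\nu_i}(b)$ be the cocycle, so that $E_i = -\sum_g\mu(g)\int_{\Gr_i}\psi(g,b)\,d\nu_i(b)$. The key identity is that for a bilateral path $\x$ and $n\ge 0$, the translated measure $\ch x_n\nu_i$ is absolutely continuous with respect to $\nu_i$ with
\[
\frac{d\ch x_n\nu_i}{d\nu_i}(b) = \exp\!\Big(\sum_{j=1}^{n}\psi(\ch h_j^{-1},\,\ch x_{j-1}^{-1}b)\Big),
\]
where $\ch h_j$ are the increments of the reversed walk; this is just the chain rule for Radon–Nikodym derivatives combined with $\ch x_n = \ch h_1\cdots\ch h_n$. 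From this, $\ch x_n\nu_i(A) = \int_A \frac{d\ch x_n\nu_i}{d\nu_i}\,d\nu_i$, and one wants a lower bound on this integral in terms of the Birkhoff sum of the cocycle.

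First I would reduce to controlling the integral $\int_A (d\ch x_n\nu_i/d\nu_i)\,d\nu_i$ from below by Jensen's inequality applied to the probability measure $\nu_i(\cdot\cap A)/\nu_i(A)$ on $A$: since $\exp$ is convex,
\[
\frac{1}{\nu_i(A)}\int_A \frac{d\ch x_n\nu_i}{d\nu_i}\,d\nu_i \;\ge\; \exp\!\left(\frac{1}{\nu_i(A)}\int_A \sum_{j=1}^n \psi(\ch h_j^{-1},\ch x_{j-1}^{-1}b)\,d\nu_i(b)\right),
\]
so that
\[
-\frac1n\log \ch x_n\nu_i(A) \;\le\; \frac1n\log\frac{1}{\nu_i(A)} \;-\; \frac1n\cdot\frac{1}{\nu_i(A)}\int_A \sum_{j=1}^n \psi(\ch h_j^{-1},\ch x_{j-1}^{-1}b)\,d\nu_i(b).
\]
The first term vanishes as $n\to\infty$. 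For the second term I would like to say that the Birkhoff-type average $\frac1n\sum_{j=1}^n \psi(\ch h_j^{-1},\ch x_{j-1}^{-1}b)$ converges $\P$-a.e. to $-\ch E_i$ — but one must be careful: the relevant stationary system is the shift $T$ on the path space $\X$ (or rather the reversed shift), $\psi$ is the one-step displacement of $\log$-density, and $-\frac1n\log\frac{d\ch x_n\nu_i}{d\nu_i}(b)$ is an additive cocycle over an ergodic transformation whose integral against the stationary measure is the differential entropy $E_i$ of $(\Gr_i,\nu_i)$. The orientation (forward vs. reversed walk) needs to be matched using \eqref{eq:Tc}, and one uses that the differential entropy is the same for the measure $\nu_i$ viewed from the reversed dynamics; alternatively, invoke the known "pointwise" identification of the differential entropy as the a.e. and $L^1$ limit of $-\frac1n\log\frac{dg_n^{-1}\th}{d\th}$ along sample paths (this is the Furstenberg entropy analogue of the Shannon–McMillan–Breiman theorem for $\mu$-stationary spaces, valid for quotients of the Poisson boundary, cf. \cite{Kaimanovich-Vershik83}).

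The main obstacle is the interchange of limits and integration: I am averaging the Birkhoff sums over $b\in A$ against $\nu_i$ while simultaneously taking $n\to\infty$ and wanting a statement for $\P$-a.e.\ $\x$. The clean way is a Fubini/Fatou argument on the product space $\X\times\Gr_i$ with the measure $\P\otimes\nu_i$ restricted to $\{b\in A\}$: on this space $(\x,b)\mapsto -\frac1n\log\frac{d\ch x_n\nu_i}{d\nu_i}(b)$ converges $\P\otimes\nu_i$-a.e.\ to the constant $E_i$ (by the pointwise entropy theorem, applied to the reversed walk and noting $\ch E_i$ for $\check\mu$ on the correspondingly reflected space equals $E_i$, using Remark \ref{rem:inverse}-style symmetry) and is uniformly integrable, so $\frac1n\int_A(-\log\frac{d\ch x_n\nu_i}{d\nu_i}(b))\,d\nu_i(b)\to E_i\,\nu_i(A)$ for $\P$-a.e.\ $\x$; combined with the Jensen bound above and $\limsup$ of the negligible $\frac1n\log\frac1{\nu_i(A)}$ term, this gives exactly \eqref{eq:dentr}. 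I would also remark that only the inequality direction is needed, so Fatou suffices and no uniform integrability subtlety is truly essential: $\liminf$ of the averaged Birkhoff sums is $\ge$ the constant by Fatou applied to the positive parts after the standard truncation of the cocycle coming from finiteness of the first moment and $H(\mu)<\infty$.
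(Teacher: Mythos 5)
Your proposal follows essentially the same route as the paper's proof: the Radon--Nikodym cocycle chain rule turns $-\log\frac{d\ch x_n\nu_i}{d\nu_i}$ into a Birkhoff sum along $T^{-1}$ of a one-step generator integrating to $E_i$, Jensen's inequality (the paper's ``convexity argument'') bounds $-\log \ch x_n\nu_i(A)$ by the $\nu_i|_A$-average of that sum, and the ergodic theorem supplies the limit. Your detour through $\ch E_i$ and reflection symmetry is unnecessary---with $F_i(\x)=-\log\frac{d\ch x_1\nu_i}{d\nu_i}(\bnd_i\x)$ one checks directly that $\int F_i\,d\P=E_i$ using independence of $h_0$ and $\bnd_i\x$---but you land in the right place, and the final passage from $L^1(\X,\P)$ convergence to the $\nu_i$-averaged statement that you flag is left at the same informal level in the paper.
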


\begin{proof}
Put
$$
F_i(\x)=-\log\frac{d\ch x_1\nu_i}{d\nu_i}
(\bnd_i\x)=-\log\frac{d\nu_i(\bnd_iT^{-1}\x)}{d\nu_i(\bnd_i\x)}
$$
(see formula \eqref{eq:Tc}), so that
$$
E_i = \int F_i(\x)\,d\P(\x) \;.
$$
Then
$$
\begin{aligned}
-\log\frac{d\ch x_n\nu_i}{d\nu_i} (\bnd_i\x)
 &= -\log\frac{d\nu_i(\bnd_iT^{-n}\x)}{d\nu_i(\bnd_i\x)} \\
 &= F_i(\x) + F_i(T^{-1}\x) + \dots + F_i(T^{-n+1}\x) \;,
\end{aligned}
$$
whence by the ergodic theorem
$$
-\frac1n \log\frac{d\ch x_n\nu_i}{d\nu_i} (\bnd_i\x) \toto_{n\to+\infty} E_i
$$
in $L^1(\X,\P)$. It implies that
$$
-\frac1n \int_A \log\frac{d\ch x_n\nu_i}{d\nu_i} (\xi)
\frac{d\nu_i(\xi)}{\nu_i(A)} \toto_{n\to+\infty} E_i \;,
$$
which, by a convexity argument, yields the claim.
\end{proof}

\section{Dimension of measures} \label{sec:Hausd}

Let us recall several notions of the dimension of a probability measure $m$ on
a compact metric space $(Z,\r)$ (all the details, unless otherwise specified,
can be found in the book \cite{Pesin97}). These notions roughly fall into two
categories: the \emph{global} ones are obtained by looking at the dimension of
sets which ``almost'' coincide with $Z$ (up to a piece of small measure $m$),
whereas the \emph{local} ones are related to the asymptotic behavior of the
ratios $\log m B(z,r)/\log r$ as the radius $r$ tends to $0$.

\subsection{Global definitions}

The \emph{Hausdorff dimension} of the measure $m$ is
$$
\dim_H m = \inf\left\{ \dim_H A : m(A)=1 \right\} \;,
$$
where $\dim_H A$ denotes the Hausdorff dimension of a subset $A\subset Z$.

The \emph{lower} and the \emph{upper box dimensions} of a subset $A\subset Z$
are defined, respectively, as
$$
\un\dim_B A = \liminf_{r\to 0}\frac{\log N(A,r)}{\log 1/r}
 \quad\textrm{and}\quad
 \ov{\dim}_B A = \limsup_{r\to 0}\frac{\log N(A,r)}{\log 1/r} \;,
$$
where $N(A,r)$ is the minimal number of balls of radius $r$ needed to cover
$A$. Ledrappier \cite{Ledrappier81} also considered the minimal number
$N(r,\e,m)$ of balls of radius $r$ such that the measure $m$ of their union is
at least $1-\e$ and defined the ``fractional dimension'' of the measure $m$ as
$$
\ov\dim_L m = \sup_{\e\to 0} \limsup_{r\to 0} \frac{\log N(r,\e,m)}{\log 1/r}
$$
(we use the notation from \cite{Young82} and below shall call it the
\emph{upper Ledrappier dimension}). As it was noticed by Young \cite{Young82},
in the same way one can also define what we call the \emph{lower Ledrappier
dimension} of the measure $m$
$$
\un\dim_L m = \sup_{\e\to 0} \liminf_{r\to 0} \frac{\log N(r,\e,m)}{\log 1/r}
$$
as well as, in modern terminology, its \emph{lower} and the \emph{upper box
dimensions}, respectively,
$$
\un\dim_B m = \liminf_{m(A)\to 1} \left\{ \un\dim_B A \right\}  \quad
\textrm{and} \quad
 \ov\dim_B m = \liminf_{m(A)\to 1} \left\{ \ov\dim_B A \right\} \;.
$$
Obviously,
$$
\un\dim_L m \le \ov\dim_L m \quad\textrm{and}\quad \un\dim_B m \le \ov\dim_B m
\;.
$$

The difference between the Ledrappier and the box dimensions is that in the
definition of the box dimensions it is the same set $A$ which has to be
covered by balls with varying radii $r$, unlike in the definition of
Ledrappier, so that
$$
\un\dim_L m \le \un\dim_B m \quad\textrm{and}\quad \ov\dim_L m \le \ov\dim_B m
\;.
$$

By \cite[Proposition 4.1]{Young82},
\begin{equation}\label{eq:dims}
\dim_H m \le \un\dim_L m  \;.
\end{equation}

\subsection{Local definitions}

The \emph{lower} and the \emph{upper pointwise dimensions} of the measure $m$
at a point $z\in Z$ are
$$
\un\dim_P m (z) = \liminf_{r\to 0}\frac{\log m B(z,r)}{\log r}
 \quad
 \textrm{and}
 \quad
 \ov\dim_P m (z) = \limsup_{r\to 0}\frac{\log m B(z,r)}{\log r} \;,
$$
respectively. Then
\begin{equation} \label{eq:HD}
\dim_H m = \esssup_z \un\dim_P m (z) \;.
\end{equation}
In particular, if $m$-a.e.\
\begin{equation} \label{eq:D}
\lim_{r\to 0}\frac{\log m B(z,r)}{\log r}=D \;,
\end{equation}
then $\dim_H m=D$. Moreover, in this case all the reasonable definitions of
dimension of the measure $m$ give the same result \cite[Theorem 4.4]{Young82}.

\begin{defn} \label{def:M}
In the situation when the convergence in \eqref{eq:D} holds just in
probability we shall say that $D$ is the \emph{mean dimension} $\dim_M m$ of
the measure $m$. We shall also introduce the \emph{lower} and the \emph{upper
mean dimensions} of the measure $m$ as, respectively,
$$
\begin{aligned}
\un\dim_M m &= \sup\left\{t: \left[\frac{\log m B(z,r)}{\log r} -
t\right]_-\toto^m 0 \right\} \;, \\
\ov\dim_M m &= \inf\left\{t: \left[\frac{\log m B(z,r)}{\log r} -
t\right]_+\toto^m 0 \right\} \;,
\end{aligned}
$$
where $[t]_+=\max\{0,t\}, [t]_-=\min\{0,t\}$, and $\displaystyle{\toto^m}$
denotes convergence in probability with respect to the measure $m$.
\end{defn}

The definition of $\dim_M$ first appeared in Ledrappier's paper
\cite{Ledrappier83} (also see \cite{Ledrappier84}), whereas $\un\dim_M$ and
$\ov\dim_M$ (although obvious generalizations of $\dim_M$) are, apparently,
new. Clearly, $\un\dim_M m \le \ov\dim_M m$. In slightly different terms,
$[\un\dim_M m, \ov\dim_M m]$ is the minimal closed subinterval of $\R$ with
the property that for any closed subset $I$ of its complement
$$
m\left\{z\in Z: \frac{\log m B(z,r)}{\log r}\in I\right\} \toto_{r\to 0} 0 \;.
$$
In particular, if $\dim_M m$ exists, then $\un\dim_M m=\ov\dim_M m=\dim_M m$.

\subsection{Mean, box and Ledrappier dimensions} \label{sec:bes}

We shall now establish simple inequalities between these dimensions.

\begin{prop}
For any probability measure $m$ on a compact metric space $(Z,\r)$
$$
\un\dim_M m \le \un\dim_L m \;.
$$
\end{prop}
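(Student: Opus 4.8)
The plan is to unwind the definition of $\un\dim_M m$ and show that any $t$ admissible in that definition is automatically $\le \un\dim_L m$. Fix $t < \un\dim_M m$, so that $\bigl[\frac{\log m B(z,r)}{\log r} - t\bigr]_- \toto^m 0$ as $r\to 0$. Unraveling the meaning of $[\,\cdot\,]_-$ and convergence in probability: for every $\e>0$ there is $r_0>0$ such that for all $r<r_0$, the set $G_r = \bigl\{z : \frac{\log m B(z,r)}{\log r} \ge t-\e\bigr\}$ has $m(G_r) \ge 1-\e$. Equivalently, $m B(z,r) \le r^{t-\e}$ for all $z\in G_r$ (recall $\log r<0$ for small $r$, so the inequality flips). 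So on a set of measure close to $1$, balls of radius $r$ have small measure, and I want to conclude that it takes many such balls to cover that set — which is exactly what $\un\dim_L$ measures.

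The key step is the covering-counting argument. Fix $\e>0$ and $r<r_0$ as above. Suppose $B(z_1,r),\dots,B(z_N,r)$ is any family of $r$-balls whose union $U$ satisfies $m(U) \ge 1-\e$. Then $m(U\cap G_r) \ge 1-2\e$. Each ball $B(z_j,r)$ meeting $G_r$ is contained in a ball $B(z_j', 2r)$ centered at a point $z_j'\in G_r$ (triangle inequality), and then $m B(z_j,r) \le m B(z_j',2r) \le (2r)^{t-\e}$, provided I first apply the estimate at radius $2r$ rather than $r$ — so I should choose $r_0$ small enough that the bound $m B(z,\rho) \le \rho^{\,t-\e}$ on a set of measure $\ge 1-\e$ holds for all $\rho < 2r_0$, and work with the corresponding good set. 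Hence
$$
1-2\e \le m(U\cap G_r) \le \sum_{j} m B(z_j,r) \le N\,(2r)^{\,t-\e}\;,
$$
so $N \ge (1-2\e)(2r)^{-(t-\e)}$. This holds for the minimal such $N$, i.e. $N(r,2\e,m) \ge (1-2\e)(2r)^{-(t-\e)}$. Taking $\log$, dividing by $\log(1/r)$, and sending $r\to 0$ gives $\liminf_{r\to 0}\frac{\log N(r,2\e,m)}{\log 1/r} \ge t-\e$, hence $\sup_{\e\to 0}\liminf_{r\to 0}\frac{\log N(r,2\e,m)}{\log 1/r} \ge t-\e$; letting $\e\to 0$ yields $\un\dim_L m \ge t$. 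Since $t<\un\dim_M m$ was arbitrary, $\un\dim_L m \ge \un\dim_M m$.

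The only mildly delicate point — and the step I expect to need the most care — is the bookkeeping around the factor $2$ from the triangle inequality: the balls in the Ledrappier covering have radius $r$ but to use the pointwise measure estimate with a center in the good set $G_\rho$ I must pass to radius $\rho=2r$, and I must make sure the good set is chosen at the right scale before counting. A clean way to organize this is: given $\e$, pick $r_0$ so that for all $\rho<2r_0$ the set $G_\rho=\{z: m B(z,\rho)\le \rho^{t-\e}\}$ has $m(G_\rho)\ge 1-\e$; then for $r<r_0$ run the covering argument against $G_{2r}$. The constants $2$ and $\e$ all wash out in the limit, so they do not affect the final inequality, but they must be handled consistently. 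No deep input is needed — just \eqref{eq:HD}-style manipulations of liminf/limsup and a crude union bound on measures.
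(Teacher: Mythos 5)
Your argument is essentially the same as the paper's: fix $t<\un\dim_M m$, use the convergence in probability to get a large-measure set on which $m B(z,\rho)\le \rho^{t-\e}$, then bound from below the cardinality of any $r$-ball cover with total measure $\ge 1-\e$ by doubling radii via the triangle inequality to land centers in the good set, giving $N(2r)^{t-\e}\ge 1-2\e$ and hence $\un\dim_L m\ge t$. One small point in your favor: you explicitly flag that the good set has to be taken at the doubled scale $2r$ (i.e., use $G_{2r}$, not $G_r$), whereas the paper phrases this as a single set $A$ valid for all $r\le r_0$ at once, which is slightly looser than what convergence in probability literally delivers; your version, with $A$ depending on the scale, is the cleaner way to state it and the constants wash out in the $\liminf$ exactly as you say. (Trivial slip: your final display should give a lower bound on $N(r,\e,m)$, not $N(r,2\e,m)$, since you started from a cover of measure $\ge 1-\e$; this does not affect the limit.)
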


\begin{proof}
Fix a number $D<\un\dim_M m$. Then for any $\e>0$ there exist $r_0>0$ and a
set $A\subset Z$ with $m(A)>1-\e$ and such that $m B(z,r)\le r^D$ for all
$z\in A$ and $r\le r_0$. Suppose now that
$$
m \left( \bigcup_i B(z_i,r) \right) \ge 1-\e
$$
for a certain set of points $\{z_i\}$ of cardinality $N$ and a certain $r\le
r_0/2$. If $B(z_i,r)$ intersects $A$, then $B(z_i,r)\subset B(z,2r)$ for some
$z\in A$, so that
$$
m\left( A \cap B(z_i,r) \right) \le m B(z_i,r) \le m B(z,2r) \le (2r)^D \;.
$$
Thus,
$$
1-2\e \le m \left( A \cap \bigcup_i B(z_i,r) \right) \le N (2r)^D \;,
$$
whence the claim.
\end{proof}

For establishing the inverse inequality we shall additionally require that the
metric space $(Z,\r)$ has the \emph{Besicovitch covering property}, i.e., that
for any precompact subset $A\subset Z$ and any bounded function $r:A\to\R_+$
(important particular case: $r$ is constant) the cover $\{B(z,r(z)), z\in A\}$
of $A$ contains a countable subcover whose multiplicity is bounded from above
by a universal constant $M=M(Z,\r)$ (recall that the \emph{multiplicity} of a
cover is the maximal number of elements of this cover to which a single point
may belong). The Besicovitch property is, for instance, satisfied for the
Euclidean space, hence, for all its compact subsets endowed with a metric
which is Lipschitz equivalent to the Euclidean one. Therefore, it is satisfied
for each of the Grassmannians $\Gr_i$ endowed with the metric
\eqref{eq:metric}.

\begin{prop}
For any probability measure $m$ on a compact space $(Z,\r)$ satisfying the
Besicovitch property
$$
\ov\dim_B m \le \ov\dim_M m \;.
$$
\end{prop}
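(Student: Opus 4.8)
The plan is to bound the upper box dimension of $m$ by covering, for each small $r$, a set of almost full measure by balls of radius $r$ whose number is controlled by the upper mean dimension. Fix $t>\ov\dim_M m$. By definition of $\ov\dim_M$ we have $[\log m B(z,r)/\log r - t]_+\toto^m 0$, which means that for every $\e>0$ there is an $r_0>0$ such that for all $r\le r_0$ the ``bad'' set $\{z: m B(z,r) < r^{t}\}$ has measure less than $\e$; equivalently, there is a set $A_r\subset Z$ with $m(A_r)\ge 1-\e$ on which $m B(z,r)\ge r^{t}$ for all $z\in A_r$.

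First I would apply the Besicovitch covering property to the cover $\{B(z,r), z\in A_r\}$ of $A_r$: it yields a countable subcover $\{B(z_i,r)\}$ of $A_r$ with multiplicity at most the universal constant $M=M(Z,\r)$. Then I would estimate its cardinality $N$ by summing measures:
$$
N\cdot \min_i m B(z_i,r) \le \sum_i m B(z_i,r) \le M\, m\!\left(\bigcup_i B(z_i,r)\right) \le M \;,
$$
and since each $z_i\in A_r$ gives $m B(z_i,r)\ge r^{t}$, we obtain $N\le M r^{-t}$. Thus $N(A_r,r)\le M r^{-t}$, hence (using that $A_r$ has measure $\ge 1-\e$) the set $A_r$ witnesses $\ov\dim_B A_r \le t$, because $\log N(A_r,r)/\log(1/r)\le t + \log M/\log(1/r)\to t$ as $r\to 0$. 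It is worth noting that the set $A_r$ depends on $r$, but this is exactly what the box dimension of a \emph{measure} allows: taking $r\to 0$ along a sequence and a diagonal argument over $\e\to 0$ one produces sets of measure tending to $1$ with upper box dimension $\le t$, so $\ov\dim_B m\le t$.

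Since $t>\ov\dim_M m$ was arbitrary, letting $t\downarrow\ov\dim_M m$ gives $\ov\dim_B m\le\ov\dim_M m$, as claimed. The only mildly delicate point is the bookkeeping in passing from the $r$-dependent sets $A_r$ (and the $\e$-dependence) to a single statement about $\ov\dim_B m = \liminf_{m(A)\to 1}\ov\dim_B A$: one must check that for each fixed $\e$, the set $A=\bigcap_{r\le r_0} A_r$ (intersection over a suitable countable sequence of radii) still has measure close to $1$ and satisfies $m B(z,r)\ge r^t$ for all those $r$ simultaneously — but this follows since the exceptional sets can be chosen with summable measures, or more simply by working along a fixed sequence $r_n\to 0$ and using continuity of $m$. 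I expect the application of the Besicovitch property and the measure-summation bound to be entirely routine; the main (very minor) obstacle is just organizing the quantifiers on $\e$ and $r$ correctly so that the conclusion is genuinely a statement about $\ov\dim_B m$ rather than about a single set.
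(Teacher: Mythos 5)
Your proposal follows essentially the same Besicovitch-covering argument as the paper, down to the same ``good'' set $A_r=\{z\in Z: m B(z,r)\ge r^D\}$ and the same cardinality estimate $N(A_r,r)\, r^D\le \sum_i m B(z_i,r)\le M$. The bookkeeping worry you flag at the end is in fact well placed: as written, both your argument and the paper's control covering numbers only along the $r$-indexed family $\{A_r\}$, which is literally a bound on the Ledrappier dimension (where the covered set is allowed to vary with the scale), whereas $\ov\dim_B m=\liminf_{m(A)\to1}\ov\dim_B A$ asks for a single fixed set of large measure covered at \emph{all} small scales. Your sketch of a fix -- intersecting the $A_{r_n}$ over a countable sequence with summable exceptional measures -- does work, but note the extra constraint you glossed over: one also needs the $r_n$ to decrease slowly enough (e.g.\ $\log(1/r_{n+1})/\log(1/r_n)\to1$) so that the ball estimate at an arbitrary scale $r\in(r_{n+1},r_n]$ can be interpolated from the estimate at $r_{n+1}$ via $m B(z,r)\ge m B(z,r_{n+1})\ge r_{n+1}^D$ without degrading the exponent. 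The paper simply writes ``whence $\ov\dim_B m\le D$'' at this point; you have identified a real (if minor and routinely fixable) elision.
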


\begin{proof}
Take a number $D>\ov\dim_M m$, let
$$
A_r = \left\{z\in Z : m B(z,r)\ge r^D \right\} \;,
$$
and consider a cover of $A_r$ by the balls $B(z_i,r),\;z_i\in A_r$ obtained
from applying the Besicovitch property. The cardinality of this cover is at
least $N(A_r,r)$, whereas its multiplicity is at most $M$, whence
$$
N(A,r)r^D \le \sum_i m B(z_i,r) \le M \;,
$$
so that
$$
\frac{\log N(A_r,r)}{\log 1/r}\le D+\frac{\log M}{\log 1/r} \;.
$$
For $r\to0$ the right-hand side of the above inequality tends to $D$, whereas
$m(A_r)\to 1$ by the choice of $D$, whence $\ov\dim_B m\le D$.
\end{proof}

\subsection{Final conclusions}

Taking stock of the above discussion we obtain

\begin{thm}\label{th:dims}
For any probability measure $m$ on a compact metric space
$$
\left.  \begin{aligned} &\dim_H m\\ &\,\un\dim_M m\\ \end{aligned}  \right\}
\le \un\dim_L m \le \un\dim_B m \;.
$$
If, in addition, the space has the Besicovitch property, then
$$
\ov\dim_L m \le \ov\dim_B m \le \ov\dim_M m \;.
$$
\end{thm}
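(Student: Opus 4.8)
The statement is a synthesis: each link in the two displayed chains has either already been isolated above or follows from a one-line comparison, so the plan is simply to assemble the chain from its pieces and to spell out the single step that was so far only asserted.

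Consider first the left-hand display, $\dim_H m,\ \un\dim_M m \le \un\dim_L m \le \un\dim_B m$. Its first two links are already available: $\dim_H m \le \un\dim_L m$ is \eqref{eq:dims}, and $\un\dim_M m \le \un\dim_L m$ is the first Proposition of \secref{sec:bes}. For the remaining link $\un\dim_L m \le \un\dim_B m$, which was stated above without proof, I would argue as follows. Fix $\e > 0$ and a set $A$ with $m(A) \ge 1-\e$; an optimal cover of $A$ by $N(A,r)$ balls of radius $r$ has union of $m$-measure $\ge m(A) \ge 1-\e$, hence is a competitor in the definition of $N(r,\e,m)$, so that $N(r,\e,m) \le N(A,r)$ for every $r$. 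Dividing $\log N(r,\e,m) \le \log N(A,r)$ by $\log(1/r)$ and passing to $\liminf_{r\to 0}$ gives $\liminf_{r\to0}\frac{\log N(r,\e,m)}{\log(1/r)} \le \un\dim_B A$; taking the infimum over admissible $A$ and then the supremum over $\e\to 0$ — the order being the one dictated by the definitions of $\un\dim_L$ and $\un\dim_B$ — yields $\un\dim_L m \le \un\dim_B m$. Together with the obvious $\un\dim_B m \le \ov\dim_B m$, the left chain is complete.

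For the second display, valid under the Besicovitch property, the same argument with $\limsup$ in place of $\liminf$ gives $\ov\dim_L m \le \ov\dim_B m$, while $\ov\dim_B m \le \ov\dim_M m$ is precisely the second Proposition of \secref{sec:bes}; chaining the two gives the claim.

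I do not expect a genuine obstacle here. The substantive content lives in the two Propositions of \secref{sec:bes} and in the cited result of Young behind \eqref{eq:dims}, all of which may be invoked directly. The only point requiring care is the order of the quantifiers ($\sup_{\e\to0}$ against the infimum over sets $A$ with $m(A)\to1$) in the Ledrappier-versus-box comparison, so that the inequality survives the passage to limits; that is why I would write that step out explicitly rather than leave it as a remark.
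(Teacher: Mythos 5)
Your proof is correct and takes essentially the same route as the paper: the theorem is an assembly of \eqref{eq:dims}, the two Propositions of \secref{sec:bes}, and the Ledrappier-vs-box comparison already asserted in the paper just before \eqref{eq:dims}. Your explicit write-out of $\un\dim_L m \le \un\dim_B m$ via $N(r,\e,m)\le N(A,r)$ and careful ordering of $\inf_A$ against $\sup_{\e}$ is sound and merely fills in a step the paper justified with a one-line remark; the only extraneous bit is the mention of $\un\dim_B m \le \ov\dim_B m$, which the theorem does not use.
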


\begin{rem}
There is no general inequality between $\dim_H m$ and $\un\dim_M m$. For
instance, take two singular measures $m_1,m_2$ for which the dimensions
\eqref{eq:D} exist and are different, say, $D_1<D_2$, and let $m$ be their
convex combination. Then $\un\dim_M m=D_1$, whereas $\dim_H m=D_2>\un\dim_M m$
by \eqref{eq:HD}.

On the other hand, by exploiting the difference between the convergence in
probability and the convergence almost everywhere one can also construct
examples with $\dim_H m<\un\dim_M m$. We shall briefly describe one such
example. Let $Z'$ be the space of unilateral binary sequences
$\a=(\a_1,\a_2,\dots)$ with the uniform measure $m'$ and the usual metric
$\r'$ for which $-\log\r'(\a,\b)$ is the length of the initial common segment
of the sequences $\a$ and $\b$. Take a sequence of cylinder sets $A_n$ with
the property that $m' A_n\to 0$, but any point of $Y$ belongs to infinitely
many sets $A_n$. Also take a sequence of integers $s_n$ (to be specified
later), and let $Z$ be the image of the space $Z'$ under the following map:
given a sequence $\a\in Z'$ take the set $I=\{n:\a\in A_n\}$ and replace with
0 all the symbols $\a_k$ with $s_n\le k\le 2s_n$ for a certain $n\in I$. The
space $Z$ is endowed with the quotient measure $m$ and the quotient metric
$\r$. If the sequence $s_n$ is very rapidly increasing, then \eqref{eq:HD} can
be used to show that $\dim_H m = \log 2/2$, whereas $\un\dim_M m=\dim_M m=\log
2$.
\end{rem}

\section{Limit set} \label{sec:limit}

Denote by  $\Pr(\B)$ the compact space of probability measures on the flag
space $\B$ endowed with the weak$^*$ topology, and let $m$ be the unique
$K$-invariant probability measure on $\B$. Then the map $go\mapsto gm$
determines an embedding of the symmetric space $S=G/K$ into $\Pr(\B)$, and
gives rise to the \emph{Satake--Furstenberg compactification} of $S$. Its
boundary $\ov S\setminus S$ contains the space $\B$ under the identification
of its points with the associated delta-measures (but, unless the rank of
$G=SL(d,\R)$ is 1, i.e., $d=2$, it also contains other limit measures, see
\cite{Guivarch-Ji-Taylor98}). The \emph{limit set} $L_\G$ of a subgroup
$\G\subset G$ in this compactification is then defined (see \cite{Guivarch90})
as
$$
L_\G =\ov{\G o}\cap \B \subset \B \;.
$$
The limit set is obviously $\G$-invariant and closed. Moreover,

\begin{thm}[\cite{Guivarch90}] \label{th:limset}
If the group $\G$ is Zariski dense in $G$, then its action on the limit set
$L_\G$ is minimal (i.e., $L_\G$ has no proper closed $\G$-invariant subsets).
\end{thm}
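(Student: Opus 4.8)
The plan is to prove the (formally stronger) statement that $\ov{\G\eta}\supseteq L_\G$ for \emph{every} point $\eta\in\B$. This suffices: if $\eta\in L_\G$ then conversely $\d_\eta\in\ov{\G o}$ gives $\d_{g\eta}=g\,\d_\eta\in g\,\ov{\G o}=\ov{\G o}$ for all $g\in\G$, so $\G\eta\subseteq\ov{\G o}\cap\B=L_\G$ and, $L_\G$ being closed, $\ov{\G\eta}\subseteq L_\G$; hence $\ov{\G\eta}=L_\G$, i.e.\ every $\G$-orbit in $L_\G$ is dense, which is minimality. (Also $L_\G\ne\emptyset$: by \propref{pr:lyap}(iv) with a smooth $\th$, $\P$-a.e.\ $x_nm\to\d_{\bnd\x}$ weak$^*$, so $x_no\to\bnd\x$ in $\Pr(\B)$; since $x_n\in\G$ and $\bnd\x\in\B$, this puts $\bnd\x$ in $\ov{\G o}\cap\B=L_\G$.)

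To prove $\ov{\G\eta}\supseteq L_\G$ I would fix $\eta\in\B$ and $\xi\in L_\G$ and choose $g_n\in\G$ with $g_no\to\xi$, i.e.\ $g_nm\to\d_\xi$ weak$^*$ in $\Pr(\B)$, where $m$ is the smooth $K$-invariant probability measure on $\B$. Using the Cartan decomposition $g_n=k_na_nl_n$ with $k_n,l_n\in K$, $a_n\in\ov{A_+}$, and passing to a subsequence along which $k_n\to k$ and $l_n\to l$ in the compact group $K$, the relation $g_nm=k_na_nm\to\d_\xi$ (we used the $K$-invariance $l_nm=m$) yields $a_nm\to\d_{k^{-1}\xi}$. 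Since $m$ is smooth of full support this forces $\|a_n\|\to\infty$; since the limit is a point mass at a \emph{full} flag it forces the Cartan direction $\log a_n/\|\log a_n\|$ to tend to a \emph{regular} element of $\ov{\af_+}$ (escape along a wall produces a limit measure spread over a positive-dimensional Schubert variety, not a single delta); and it forces $k^{-1}\xi=\V_0$, the standard flag.

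The key step is then the attracting dynamics of such $a_n$ on $\B$. Let $\Omega\subset\B$ be the big cell of flags transverse to the opposite standard flag $\spn\{e_d\}\subset\spn\{e_d,e_{d-1}\}\subset\cdots$; it is Zariski-open, and writing $\z\in\Omega$ as $u\V_0$ with $u$ lower unipotent one has $a_n\z=(a_nua_n^{-1})\V_0\to\V_0$ uniformly on compact subsets of $\Omega$, because $a_nua_n^{-1}\to e$ when $a_n$ escapes regularly. Its complement $\B\setminus\Omega$ is a proper algebraic subvariety; since $\G$ is Zariski dense in $G$ and $\B$ is $G$-homogeneous, the orbit $\G\eta$ is Zariski dense in $\B$, so there is $\g\in\G$ with $l\g\eta\in\Omega$. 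Then $l_n\g\eta\to l\g\eta\in\Omega$, hence $a_n(l_n\g\eta)\to\V_0$, and therefore
$$(g_n\g)\,\eta=k_na_nl_n\g\,\eta\toto k\V_0=\xi \;.$$
Since $g_n\g\in\G$ this shows $\xi\in\ov{\G\eta}$, and as $\xi\in L_\G$ was arbitrary, $L_\G\subseteq\ov{\G\eta}$, as desired.

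The main obstacle is precisely the dynamical analysis of the last paragraph: one must control the action on the flag variety of an \emph{arbitrary} sequence $g_n$ in $\G$ escaping to infinity (possibly non-discretely, and along an a priori unknown — though, as shown above, necessarily regular — direction), and the two devices that make this possible are the Cartan decomposition and the genericity choice of $\g$, the latter relying essentially on Zariski density. A variant would identify $L_\G$ with $\supp\nu$ via uniqueness of the $\mu$-stationary measure on $\B$ (\thmref{th:sub}): the inclusion $\supp\nu\subseteq L_\G$ is immediate from \propref{pr:lyap}(iv) as above, but proving $L_\G\subseteq\supp\nu$ still requires the same dynamical input — now applied to the translates $g_n\nu$, using that $\nu$ charges no proper subvariety — together with the $\G$-invariance of $\supp\nu$, so the direct argument seems cleaner.
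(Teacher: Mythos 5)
The paper merely cites Guivarc'h for this theorem and supplies no proof of its own, so there is nothing internal to compare your argument against; I assess it on its merits. Your proof is correct, and it is essentially the standard route (also the one in the cited reference, up to presentation): Cartan decomposition $g_n=k_na_nl_n$ of a sequence in $\G$ with $g_no\to\xi$ in the Satake--Furstenberg compactification, the observation that a delta-limit at a point of the \emph{full} flag variety forces every Cartan gap $a_{n,i}/a_{n,i+1}$ to diverge, the resulting contraction $a_n\z\to\V_0$ uniformly on compacta of the big Bruhat cell $\Omega$, and Zariski density to locate $\g\in\G$ with $l\g\eta\in\Omega$. You in fact establish the cleaner, slightly stronger statement that $\ov{\G\eta}\supset L_\G$ for \emph{every} $\eta\in\B$, i.e.\ that $L_\G$ is the unique $\G$-minimal subset of $\B$, which is worth recording explicitly. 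Two small remarks. First, ``forces the Cartan direction $\log a_n/\|\log a_n\|$ to tend to a regular element'' is an unnecessary detour and strictly speaking requires a further subsequence; what the delta-limit actually gives, and all you need, is the divergence of every gap $a_{n,i}/a_{n,i+1}$, which already yields $a_nua_n^{-1}\to e$ uniformly on compacta of $N^-$. Second, the assertion that $\G\eta$ is Zariski dense in $\B$ deserves one more sentence: the Zariski closure of $\G\eta$ is $\G$-invariant, its stabilizer in $G$ is a Zariski closed subgroup containing $\G$ and hence equals $G$, and since $G$ acts transitively on $\B$ the only nonempty $G$-invariant Zariski closed subset is $\B$ itself.
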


Below we shall need the following elementary property.

\begin{prop} \label{pr:min}
Under the conditions of \thmref{th:limset} let $U\subset\B$ be an open set
with $U\cap L_\G \neq \emp$. Then there exists finitely many elements
$\g_1,\dots,\g_r\in\G$ such that
$$
L_\G \subset \bigcup_i \g_i U\ .
$$
\end{prop}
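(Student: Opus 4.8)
The plan is to derive this purely from the minimality of the $\G$-action on $L_\G$ (\thmref{th:limset}) together with compactness, with no further input. First I would record that $L_\G=\ov{\G o}\cap\B$ is a closed subset of the compact flag space $\B$, hence is itself compact; this is what will turn a covering by $\G$-translates of $U$ into a \emph{finite} such covering.

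The heart of the matter is the claim that the family $\{\g U:\g\in\G\}$ already covers $L_\G$. To see this, fix once and for all a point $y\in U\cap L_\G$, which exists by hypothesis; then $U\cap L_\G$ is a nonempty open subset of $L_\G$. Now take an arbitrary $x\in L_\G$. By \thmref{th:limset} the orbit $\G x$ is dense in $L_\G$ (minimality is exactly the statement that every orbit closure, being a closed $\G$-invariant subset, must be all of $L_\G$). Consequently $\G x$ meets the nonempty open set $U\cap L_\G$, i.e. there is $\g\in\G$ with $\g x\in U$, equivalently $x\in\g^{-1}U$. Since $x$ was arbitrary, $L_\G\subset\bigcup_{\g\in\G}\g^{-1}U$; as $\g\mapsto\g^{-1}$ permutes $\G$, this is the desired open cover of $L_\G$ by the sets $\g U$.

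To finish, I would invoke compactness of $L_\G$ to extract a finite subcover $L_\G\subset\bigcup_{i=1}^{r}\g_i U$, which is precisely the assertion. I do not expect a real obstacle: the only step needing a word of justification is the equivalence between "$L_\G$ has no proper closed $\G$-invariant subset" and "every $\G$-orbit is dense in $L_\G$", which is the standard observation that a non-dense orbit $\G x$ would have $\ov{\G x}$ as a proper closed $\G$-invariant subset. Thus the proposition is a soft consequence of minimality plus the compactness of $L_\G$ inside $\B$.
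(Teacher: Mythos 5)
Your proof is correct and is essentially the argument in the paper: both reduce the statement to minimality plus compactness of $L_\G$, differing only in that the paper applies minimality directly to the closed $\G$-invariant set $\B\setminus\bigcup_{\g\in\G}\g U$, while you use the equivalent reformulation that every $\G$-orbit in $L_\G$ is dense.
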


\begin{proof}
Minimality of $L_\G$ means that any closed $\G$-invariant subset of $\B$
either contains $L_\G$ or does not intersect it. Since the set
$\B\setminus\bigcup_{\g\in\G}\g U$ does not contain $L_\G$ (as $U\cap L_\G
\neq \emp$), it does not intersect $L_\G$, so that $L_\G \subset
\bigcup_{\g\in\G}\g U$. Finally, since $L_\G$ is compact, the above cover
contains a finite subcover.
\end{proof}

\begin{rem}
\thmref{th:limset} and \propref{pr:min} obviously carry over to the
projections $L^i_\G=\pi_i(L_\G)\subset\Gr_i$ of the limit set $L_\G$ to the
Grassmannians $\Gr_i$.
\end{rem}

By \propref{pr:lyap}(iv) (see \cite{Guivarch-Raugi85} for a more general
argument) a.e.\ sample path $\x$ converges as $n\to+\infty$ to the limit flag
$\bnd\x$ in the Satake--Furstenberg compactification as well. Therefore,
$\supp\nu\subset L_\G$. If $\supp\mu$ generates $\G$ as a semigroup, then
$\mu$-stationarity of the harmonic measure $\nu$ implies its quasi-invariance,
so that in this case $\supp\nu$ is $\G$-invariant, and, if $\G$ is Zariski
dense, $\supp\nu=L_\G$ by \thmref{th:limset}.

\begin{rem}
If $\G$ is a lattice, then $L_\G=\B$. On the other hand, if $\G$ is Zariski
dense, then, as it is shown in \cite{Guivarch90}, $L_\G$ has positive
Hausdorff dimension, which is deduced from the positivity of the dimension of
the harmonic measure (under the assumption that $\mu$ has an exponential
moment). Also see \cite{Link04} for recent results on the Hausdorff dimension
of the \emph{radial limit set}. It would be interesting to investigate
existence of random walks such that their harmonic measure has the maximal
Hausdorff dimension (equal to the dimension of the limit set), cf.
\cite{Connell-Muchnik07}.
\end{rem}

\section{Dimension of the harmonic measure} \label{sec:link}

\subsection{Rate of contraction estimate}

Recall (see \secref{sec:RW}) that the negative part $(\ch x_n)_{n\ge
0}=(x_{-n})_{n\ge 0}$ of a bilateral sample path $\x$ performs the random walk
on $\G$ governed by the reflected measure $\ch\mu$. Denote by $\bnd^-\x\in\B$
the corresponding limit flag, and for $i\in\{1,2,\dots,d-1\}$ let
$$
\xi_\x=\left(\bnd_{d-i}^-\x\right)^\bot\in\Gr_i
$$
be the orthogonal complement in $\R^d$ of the $(d-i)$-dimensional subspace of
the flag $\bnd^-\x$ (for simplicity we omit the index $i$ in the notation for
$\xi_\x$; the Grassmannian $\Gr_i$ to which it belongs should always be clear
from the context).

\begin{thm} \label{th:contr}
For any Grassmannian $\Gr_i$, any $r<1$, and $\P$-a.e.\ $\x\in\X$,
$$
\liminf_n \left[ - \frac1n \log\diam \ch x_n^{-1} B(\xi_\x,r) \right] \ge \l_i
- \l_{i+1} \;.
$$
\end{thm}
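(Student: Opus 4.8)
The quantity $\diam \ch x_n^{-1} B(\xi_\x,r)$ measures how much the linear map $\ch x_n^{-1}$ (acting on $\Gr_i\subset P\bigwedge^i\R^d$ with the sine metric $\r_i$) expands a ball around the point $\xi_\x$. Since $\ch x_n$ is the $n$-th step of the $\ch\mu$-random walk, its Lyapunov exponents are $\ch\l_j=-\l_{d+1-j}$ by \remref{rem:inverse}; thus $\ch x_n^{-1}$ has, in the exterior power $\bigwedge^i$, a top exponent governing overall expansion and a subtop exponent governing the expansion transverse to the most-expanded direction. The idea is: the point $\xi_\x$ is chosen precisely so that it is the \emph{most contracted} direction of $\ch x_n^{-1}$ in $\Gr_i$ (equivalently, the direction picked out by the limit flag $\bnd^-\x$ of $\ch x_n$), and the Lipschitz constant of $\ch x_n^{-1}$ restricted to a small neighbourhood of that point behaves like the ratio (second singular value)$/$(first singular value) of $\ch x_n$ acting on $\bigwedge^i\R^d$, which decays at exponential rate $\l_i-\l_{i+1}$.

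\textbf{Key steps.} First I would make the infinitesimal statement precise: for a linear map $g$ on $\R^d$ with Cartan decomposition $g=k_1 a k_2$, $a=\diag(a_1\ge\dots\ge a_d)$, and for a point $\xi\in\Gr_i$, the derivative of the action on $\Gr_i$ (in the sine metric) at $\xi$ has operator norm comparable to a ratio of the form $a_{j}/a_{j'}$ determined by which "Schubert cell" $\xi$ lies in relative to the flag of $k_2^{-1}$; the \emph{worst} case over all $\xi$ in a full-measure cell gives norm $\asymp a_i/a_{i+1}$ acting near the point corresponding to $\spn\{e_1,\dots,e_i\}$ under $k_2^{-1}$ — but more to the point, near the specific point that is the image of the top-weight line under $g$ itself, which is exactly $\xi_\x$ for $g=\ch x_n$ (since $\xi_\x$ is built from the limit flag $\bnd^-\x$, i.e.\ the Oseledets flag of $\ch x_n$). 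So the second step is to identify, using \propref{pr:lyap}, that $\xi_\x=\pi_i(\bnd^-\x)^{\perp\text{-shift}}$ is the Oseledets direction making $\ch x_n^{-1}$ act as a contraction with rate exactly the gap $\ch\l_{?}-\ch\l_{?}$; translating indices through $\ch\l_j=-\l_{d+1-j}$ turns this into $\l_i-\l_{i+1}$. Third, I would upgrade the infinitesimal bound to a bound on the diameter of a ball of fixed radius $r<1$: since $r$ is bounded away from $1$, the ball $B(\xi_\x,r)$ stays inside a single Schubert cell (after finitely many steps, using that $\xi_\x$ converges and the cell containing it is open), and one controls the diameter of the image by the sup of the derivative norm over that ball times $r$, plus a uniformly bounded distortion factor. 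Using $\frac1n\log a(\ch x_n)\to\ch\l$ and $\frac1n\log\|\ch x_n\|\to\ch\l_1$ from \thmref{th:osel}, together with the fact that the second exterior-power singular-value ratio converges at rate $\ch\l_{d-i}-\ch\l_{d-i-1}$ — wait, rather: the relevant ratio for the action on $\Gr_i$ near $\xi_\x$ is $a_{i+1}(\ch x_n)/a_i(\ch x_n)$, whose log, divided by $n$, tends to $\ch\l_{i+1}-\ch\l_i$; and since $\ch\l_i=-\l_{d+1-i}$ one must recheck the bookkeeping, but the final answer is $-(\l_i-\l_{i+1})$ for the log-rate of $\diam$, giving the claimed $\liminf$.

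\textbf{Main obstacle.} The delicate point is the \emph{uniformity in $\x$ of the distortion constants}: the infinitesimal derivative bound involves a constant depending on how close $\xi_\x$ sits to the boundary of its Schubert cell (equivalently, on the "angle" between $\bnd^-\x$ and the complementary flag of $\ch x_n$), and a priori this could degenerate along a sequence $n\to\infty$, spoiling the exponential rate. The way around it is to note that the relevant complementary flag is itself an Oseledets object: by the two-sided structure of the Oseledets theorem (the stable flag $\bnd^-\x$ and the unstable flag of $\ch x_n$, which is governed by $\bnd(T^{-n}\x)$-type data and converges), the angle between them stays bounded below $\P$-a.s., so the distortion factor is $e^{o(n)}$ and is absorbed. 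Making this "$e^{o(n)}$" claim rigorous — i.e.\ that the geometric position of $\xi_\x$ inside the flag geometry of $\ch x_n$ is asymptotically transverse — is the real content; everything else (the linear-algebra computation of $\|D(g|_{\Gr_i})\|_\xi\asymp a_{i+1}/a_i$ in the worst-relevant cell, and the passage from derivative bound to diameter bound via $r<1$) is routine and I would not belabour it. Once that transversality is in hand, combining it with \thmref{th:osel} applied to $\ch\mu$ and \remref{rem:inverse} yields the stated inequality directly.
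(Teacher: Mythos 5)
Your proposal correctly identifies the mechanism (the Lyapunov gap $\l_i-\l_{i+1}$ governs the contraction at the Oseledets point $\xi_\x$) and the reduction to $\bigwedge^i\R^d$ for general $i$, but it takes a different, heavier route than the paper at the load-bearing step, and you leave that step open.

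You want to bound the \emph{derivative} of $\ch x_n^{-1}$ on $\Gr_i$ near $\xi_\x$ via the Cartan decomposition of $\ch x_n$, then integrate over the ball. This makes the estimate depend on where $\xi_\x$ sits relative to the $n$\emph{-varying} Cartan flag of $\ch x_n$, which is precisely why you must then worry about ``uniformity of distortion constants'' and appeal to an a.s.\ transversality of $\bnd^-\x$ to those Cartan flags. You acknowledge this transversality is ``the real content'' and do not prove it; as written that is a genuine gap, and without it the exponential rate is not established.

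The paper's proof never introduces the Cartan flag of $\ch x_n$ and hence has no distortion constant to control. Because $\r_i$ is the sine metric, the ball $B(\xi_\x,r)$ in $P\bigwedge^i\R^d$ has the concrete description $\bigl\{[v+w] : v\in\xi_\x\setminus\{0\},\ w\perp\xi_\x,\ \|w\|/\|v\|\le C(r)\bigr\}$. By construction $\xi_\x=(\bnd^-_{d-i}\x)^\perp$, so after passing to $\bigwedge^i$ the orthogonal complement of the line $\xi_\x$ is exactly the top proper subspace of the \emph{fixed} ($n$-independent) Lyapunov flag of $\bigwedge^i\ch x_n^{-1}$. \propref{pr:lyap}(iii) applied to $\ch\mu$ then gives
$$
\limsup_n \frac1n\log\frac{\bigl\|\bigl(\textstyle\bigwedge^i\ch x_n^{-1}\bigr) w\bigr\|}{\bigl\|\bigl(\textstyle\bigwedge^i\ch x_n^{-1}\bigr) v\bigr\|}\ \le\ \l_{i+1}-\l_i
$$
uniformly on the ball, and the uniformity is automatic: both numerator and denominator are controlled by operator norms of $\bigwedge^i\ch x_n^{-1}$ restricted to two fixed subspaces ($\xi_\x$ and its orthogonal complement), to which Oseledets regularity applies directly. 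Since $\r_i$ is the sine metric, that ratio controls the angle of $\bigl[\bigl(\bigwedge^i\ch x_n^{-1}\bigr)(v+w)\bigr]$ to the fixed line $\bigl(\bigwedge^i\ch x_n^{-1}\bigr)\xi_\x$, hence the diameter, with no derivative bound, Schubert-cell analysis, or transversality lemma needed. If you insist on your route you would have to actually prove the a.s.\ lower bound on the angle between $\bnd^-\x$ and the Cartan flags of $\ch x_n$; but decomposing relative to the Oseledets flag, as the paper does, makes the whole issue disappear.
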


\begin{proof}
Let us consider first the case when $i=1$. Then by the definition of the
metric $\r$ \eqref{eq:metric} on $\Gr_1\cong P\R^d$ the ball $B(\xi_\x,r)$
consists of the projective classes of all the vectors $v+w$, where
$v\in\xi_\x\setminus\{0\}$ and $w\bot\xi_\x$ with
\begin{equation} \label{eq:C}
\frac{\|w\|}{\|v\|}\le C \quad \textrm{for a certain constant} \quad C=C(r)
\;.
\end{equation}
By \propref{pr:lyap}(iii) applied to the random walk $(\G,\ch\mu)$ we have
that
$$
\lim_n \frac1n \log\frac{\|\ch x_n^{-1} w\|}{\|\ch x_n^{-1} v\|} \le\l_2-\l_1
$$
uniformly under condition \eqref{eq:C}, whence the claim.

The general case argument follows the same lines with the only difference that
now instead of the action on $P\R^d$ one has to consider the action on
$P\bigwedge^i\R^d$. The sequence of matrices $\bigwedge^i \ch x_n^{-1}$ (which
are the images of $\ch x_n^{-1}$ in the $i$-th external power representation)
is also Lyapunov regular with the Lyapunov spectrum consisting of all the sums
of the form $\l_{j_1}+\l_{j_2}+\dots+\l_{j_i}$ with $1\le j_1 < \cdots < j_i
\le d$. In particular, the top of the spectrum (corresponding precisely to the
eigenspace $\xi_\x\in P\bigwedge^i\R^d$ as orthogonal to the highest dimension
proper subspace of the Lyapunov flag) is $\l_1+\l_2+\dots+\l_i$, whereas the
second point in the spectrum is $\l_1+\l_2+\dots+\l_{i-1}+\l_{i+1}$, the
spectral gap being $\l_i-\l_{i+1}$.
\end{proof}

\begin{prop}\label{pr:balls}
For any index $i\in\{1,2,\dots,d-1\}$, any $\e>0$ and $\P$-a.e.\ sample path
$\x$ the sets
$$
A_N = \bigcap_{n\ge N} \ch x_n B \left( \bnd_iT^{-n}\x,
e^{-n(\l_i-\l_{i+1}-\e)}\right) \subset \Gr_i
$$
have positive measure $\nu_i$ for all sufficiently large $N$.
\end{prop}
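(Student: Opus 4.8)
The plan is to produce, for $\P$-a.e.\ sample path $\x$, a single open set $U=U(\x)\subset\Gr_i$ with $\nu_i(U)>0$ which is contained in $A_N$ for all sufficiently large $N$; since $A_N$ increases with $N$ (it is an intersection over the indices $n\ge N$), this suffices. The first, purely formal, step is to rewrite the defining condition: by \eqref{eq:Tc} and the $G$-equivariance of $\pi_i$ one has $\bnd_iT^{-n}\x=\ch x_n^{-1}\bnd_i\x$, so that, writing $r_n=e^{-n(\l_i-\l_{i+1}-\e)}$, a point $\eta\in\Gr_i$ lies in $\ch x_nB(\bnd_iT^{-n}\x,r_n)$ precisely when $\r(\ch x_n^{-1}\eta,\,\ch x_n^{-1}\bnd_i\x)\le r_n$. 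Thus $A_N$ is exactly the set of those $\eta$ which the maps $\ch x_n^{-1}$ drive towards $\bnd_i\x$ at exponential rate at least $\l_i-\l_{i+1}-\e$ for all large $n$ --- which is the contraction quantified by \thmref{th:contr}.

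The one substantial ingredient I will need is that the forward and backward limit flags $\bnd\x$ and $\bnd^-\x$ are transverse for $\P$-a.e.\ $\x$; equivalently $\bnd_i\x\cap\bnd^-_{d-i}\x=\{0\}$ for each $i$. Since $\bnd\x$ depends only on the increments $h_1,h_2,\dots$ while $\bnd^-\x$ depends only on $h_0,h_{-1},\dots$, the two flags are $\P$-independent, and conditionally on $\bnd^-\x$ the flag $\bnd\x$ is still $\nu$-distributed; as the flags failing to be transverse to a given flag form a proper algebraic subvariety of $\B$, this conditional event has probability zero by \thmref{th:sub}, and integrating out $\bnd^-\x$ gives the claim. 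I then translate transversality into metric terms: the distance \eqref{eq:metric} on $\Gr_i\subset P\bigwedge^i\R^d$ attains its maximal value $1$ on a pair of $i$-subspaces exactly when they fail to be in general position, and $\r(\bnd_i\x,\xi_\x)=1$ would force $\xi_\x^\bot=\bnd^-_{d-i}\x$ to meet $\bnd_i\x$; hence transversality yields $\r(\bnd_i\x,\xi_\x)<1$. I also record that $\bnd_i\x\in\supp\nu_i$ for $\P$-a.e.\ $\x$, which is immediate from $\nu_i=\bnd_i(\P)$.

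With these facts the argument closes quickly. For such an $\x$ choose a rational $r$ with $\r(\bnd_i\x,\xi_\x)<r<1$ and put $U=B(\xi_\x,r)$; this is an open neighbourhood of $\bnd_i\x$, so $\nu_i(U)>0$. Since $\bnd_i\x\in U$, for every $\eta\in U$ both $\ch x_n^{-1}\eta$ and $\ch x_n^{-1}\bnd_i\x$ lie in $\ch x_n^{-1}B(\xi_\x,r)$, so that $\r(\ch x_n^{-1}\eta,\,\ch x_n^{-1}\bnd_i\x)\le\diam \ch x_n^{-1}B(\xi_\x,r)$, which by \thmref{th:contr} is $\le r_n$ for all $n\ge N_0=N_0(\x)$; by the reformulation above this means $U\subset A_{N_0}\subset A_N$ for every $N\ge N_0$, whence $\nu_i(A_N)\ge\nu_i(U)>0$. (To apply \thmref{th:contr} for a radius depending on $\x$ I invoke it simultaneously for all rational $r\in(0,1)$, a countable intersection of full-measure events.) The main obstacle is precisely the transversality input of the second paragraph --- it is where independence of the past and the future of the bilateral walk and \thmref{th:sub} genuinely enter --- whereas the contraction estimate \thmref{th:contr} carries all of the analytic weight and everything else is bookkeeping.
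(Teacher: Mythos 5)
Your proof is correct and follows essentially the same line as the paper's: rewrite $A_N$ via \eqref{eq:Tc}, produce a ball $B(\xi_\x,r)$ with $r<1$ of positive $\nu_i$-measure, and use \thmref{th:contr} to show this ball is absorbed into $A_N$ for large $N$. The one place where you do more than the printed proof is in making the transversality of the forward and backward flags explicit. The paper only states that $\nu_i S(\xi,1)=0$ (by algebraicity and \thmref{th:sub}) and deduces $\nu_i B(\xi_\x,r)>0$ for some $r<1$; but the last step --- that $B(\xi_\x,r)\subset A_N$ --- tacitly also requires $\bnd_i\x\in B(\xi_\x,r)$, i.e.\ $\r(\bnd_i\x,\xi_\x)<1$, in order to put $\ch x_n^{-1}\bnd_i\x$ and $\ch x_n^{-1}\eta$ into the same set of controlled diameter. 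You supply this via independence of $\bnd\x$ and $\bnd^-\x$ (past and future of the bilateral walk), the fact that conditionally $\bnd\x$ is still $\nu$-distributed, and algebraicity/\thmref{th:sub}; note that this is exactly the same ingredient $\nu_i S(\xi,1)=0$ used in the paper, just combined with Fubini, so your argument is a tightening rather than a different route. Your remark about invoking \thmref{th:contr} simultaneously for all rational $r$ is also a welcome explicit care the paper omits. Two tiny imprecisions in your informal narration do not affect the proof: $A_N$ is not the set of $\eta$ driven towards $\bnd_i\x$ (it is the set of $\eta$ kept $r_n$-close to the moving target $\ch x_n^{-1}\bnd_i\x$), and ``fail to be in general position'' is not the right paraphrase of $\r(\xi,\z)=1$ --- the correct algebraic condition, which you do state next, is $\xi\cap\z^\bot\neq\{0\}$.
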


\begin{proof}
By definition of the metric $\r$ \eqref{eq:metric} its values do not exceed 1,
and for any $\xi\in\Gr_i$ the radius 1 sphere $S(\xi,1)$ centered at $\xi$
consists precisely of those $\z\in\Gr_i$ for which the vectors from
$\bigwedge^i\R^d$ associated with $\xi$ and $\z$ are orthogonal. The latter
condition on $\z$ is algebraic, so that by \thmref{th:sub} $\nu_i S(\xi,1)=0$
for any $\xi\in\Gr_i$ (the measure $\nu_i$ being the projection of the measure
$\nu$). Therefore, for a.e.\ sample path $\x$ there exists $r<1$ such that
$\nu_i B(\xi_\x,r)>0$. On the other hand, by \thmref{th:contr} and
\eqref{eq:Tc} $B(\xi_\x,r)\subset A_N$ for all sufficiently large $N$.
\end{proof}

\subsection{Dimension  estimate}

Recall that $\bnd_i T^{-n}\x = \ch x_n^{-1}\bnd_i\x$ \eqref{eq:Tc}. By
\lemref{lem:dentr} and \propref{pr:balls}, for $\P$-a.e.\ sample path $\x$
\begin{equation}\label{eq:minentr}
\begin{split}
&\limsup_{n\to\infty} \left[ -\frac1n \log\nu_i B \left( \bnd_iT^{-n}\x,
e^{-n(\l_{i}-\l_{i+1}-\e)} \right) \right] \\
 &\le \limsup_{n\to\infty}\left[-\frac{1}{n} \log \ch x_n\nu_i(A_N)\right] \le E_i \;.
\end{split}
\end{equation}
The left-hand side of this inequality looks almost like in the definition of
the pointwise dimension, with the only difference that the ball centers vary.
We shall take care of this difference by switching to the mean dimension and
using $\mu$-stationarity of the measures $\nu_i$.

\begin{thm}\label{th:dim}
For any $i\in\{1,2,\dots,d-1\}$,
$$
\dim_H(\nu_i)\le\ov\dim_B(\nu_i)\le\ov\dim_M\nu_i \le
\frac{E_i}{\l_i-\l_{i+1}} \;.
$$
\end{thm}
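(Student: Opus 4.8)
The plan is to read off the first two inequalities from \thmref{th:dims} for free and to squeeze the last one out of the rate estimate \eqref{eq:minentr}. Indeed, \thmref{th:dims} gives $\dim_H(\nu_i)\le\un\dim_L(\nu_i)\le\un\dim_B(\nu_i)$ with no extra hypotheses, and trivially $\un\dim_B(\nu_i)\le\ov\dim_B(\nu_i)$; moreover, since each Grassmannian $\Gr_i$ with the metric \eqref{eq:metric} enjoys the Besicovitch property (as noted in \secref{sec:bes}), the same theorem gives $\ov\dim_B(\nu_i)\le\ov\dim_M\nu_i$. So the whole content is the inequality $\ov\dim_M\nu_i\le E_i/(\l_i-\l_{i+1})$, and that is what I would work on.

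To prove it, fix an arbitrary $D>E_i/(\l_i-\l_{i+1})$ and choose $\e>0$ and $\d>0$ so small that $D_0:=(E_i+\d)/(\l_i-\l_{i+1}-\e)<D$. Put $r_n=e^{-n(\l_i-\l_{i+1}-\e)}$, so that $\log(1/r_n)=n(\l_i-\l_{i+1}-\e)$. Dividing the estimate \eqref{eq:minentr} by $\log(1/r_n)$ and absorbing the $\d$ into $E_i$, one gets: for $\P$-a.e.\ $\x$ there is an integer $N(\x)$ with
$$
\frac{\log\nu_i B\bigl(\bnd_iT^{-n}\x,\,r_n\bigr)}{\log r_n}\le D_0\qquad\text{for all }n\ge N(\x)\;.
$$
The awkward feature here is that the ball \emph{centres} $\bnd_iT^{-n}\x$ drift with $n$; the key move — and, I expect, the only non-mechanical step — is to get rid of this drift by using that $T$ preserves $\P$ and $\bnd_i(\P)=\nu_i$ (equivalently, the $\mu$-stationarity of $\nu_i$ via \eqref{eq:Tc}), so that for each fixed $n$ the point $\bnd_iT^{-n}\x$ is itself $\nu_i$-distributed. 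Pushing the displayed a.e.\ statement forward by $\bnd_iT^{-n}$ then yields, for every $n$,
$$
\nu_i\Bigl\{\xi\in\Gr_i:\tfrac{\log\nu_i B(\xi,r_n)}{\log r_n}>D_0\Bigr\}=\P\bigl\{\x:\tfrac{\log\nu_i B(\bnd_iT^{-n}\x,r_n)}{\log r_n}>D_0\bigr\}\le\P\{N(\x)>n\}\toto_{n\to\infty}0\;,
$$
i.e.\ now the centre is a \emph{free} $\nu_i$-random point, decoupled from $\x$.

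It remains to pass from the geometric sequence $r_n$ to all small radii, which is routine since $r_{n+1}/r_n$ is a fixed constant. For $r\in[r_{n+1},r_n]$ and any $\xi$ one has $B(\xi,r_{n+1})\subset B(\xi,r)$ and $\log(1/r)\ge\log(1/r_n)$, whence
$$
\frac{-\log\nu_i B(\xi,r)}{\log(1/r)}\le\frac{-\log\nu_i B(\xi,r_{n+1})}{\log(1/r_n)}=\frac{n+1}{n}\cdot\frac{-\log\nu_i B(\xi,r_{n+1})}{\log(1/r_{n+1})}\;;
$$
so the event $\{\,\log\nu_i B(\xi,r)/\log r>D\,\}$ is contained, once $n$ is large enough that $\tfrac{n}{n+1}D>D_0$, in the event $\{\,\log\nu_i B(\xi,r_{n+1})/\log r_{n+1}>D_0\,\}$, which has $\nu_i$-measure $\to0$. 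Hence $[\log\nu_i B(\xi,r)/\log r-D]_+\toto^{\nu_i}0$ as $r\to0$, which by \defref{def:M} means $\ov\dim_M\nu_i\le D$; letting $D\downarrow E_i/(\l_i-\l_{i+1})$ finishes the proof. Thus the only real obstacle is the decoupling of the moving ball centres, handled by stationarity; everything else is bookkeeping with logarithms.
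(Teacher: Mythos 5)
Your proposal is correct and follows essentially the same approach as the paper: read off the first two inequalities from \thmref{th:dims} (the last one using the Besicovitch property of the Grassmannian metric), then use $T$-invariance of $\P$ together with $\bnd_i(\P)=\nu_i$ to convert the $\P$-a.e.\ statement \eqref{eq:minentr} with drifting ball centres into a statement about $\nu_i$-distributed fixed centres holding in probability. The only difference is that you spell out the routine interpolation from the geometric sequence $r_n=e^{-n(\l_i-\l_{i+1}-\e)}$ to all small radii, which the paper leaves implicit.
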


\begin{proof}
Let
$$
A = \bigcap_\eta\bigcup_N\bigcap_{n\ge N}\Big\{\x :
    -\frac1n\log {\nu}^i B\big(\bnd_iT^{-n}\x,e^{-n(\l_i-\l_{i+1}-\e)}\big) \le
    E_i+\eta\Big\} \;.
$$
be the set of sample paths satisfying condition \eqref{eq:minentr}. Since
$\P(A)=1$, for any $\eta,\chi>0$
$$
\P\Big\{\x :
 -\frac{1}{n}\log{\nu}^i B\big(\bnd_iT^{-n}\x,e^{-n(\l_i-\l_{i+1}-\e)}\big)
                                \le E_i+\eta\Big\}\ge 1-\chi
$$
for all sufficiently large $n$. The transformation $T$ preserves the measure
$\P$, so that its image under the map $\x\mapsto\bnd_iT^{-n}\x$ is $\nu_i$,
whence the rightmost inequality. The other inequalities follow from
\thmref{th:dims} because the metric $\r$ has the Besicovitch property (see
\secref{sec:bes}).
\end{proof}

\section{The construction}\label{sec:const}

For the rest of this Section we shall fix a probability measure $\mu$ on a
subgroup $\G\subset G$ satisfying our standing assumptions from
\secref{sec:RW} and \secref{sec:exponents}.

\conv{In addition we shall assume in this Section that $\supp\mu$ generates
$\G$ as a semigroup.}

Recall that an element of the group $G=SL(d,\R)$ is called \emph{$\R$-regular}
if it is diagonalizable over $\R$ and the absolute values of its eigenvalues
are pairwise distinct. By \cite{Benoist-Labourie93} any Zariski dense subgroup
of $G$ contains such an element. Let us fix an $\R$-regular element $\g\in\G$
and consider the sequence of probability measures on $\G$
\begin{equation} \label{eq:mu_k}
\mu^k = \frac12\mu+\frac14\left( \d_{\g^k}+\d_{\g^{-k}} \right) \;.
\end{equation}
where $\d_g$ denotes the Dirac measure at $g$.

\begin{rem}
Actually, we just need that $\g$ be diagonalizable over $\R$ with the absolute
value of some of its eigenvalues being not $1$.
\end{rem}

Denote by $\nu^k$ the harmonic measures on the flag space $\B$ of the random
walks $\left(\G,\mu^k\right)$, and by $\nu^k_i$ their quotients on the
Grassmannians $\Gr_i,\;1\le i \le d-1$. Our Main Theorem follows from

\begin{thm} \label{th:main}
$$
\min_i\{\dim_H\nu_i^k\} \toto_{k\to\infty} 0 \;.
$$
\end{thm}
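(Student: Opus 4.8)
The plan is to combine the dimension estimate of \thmref{th:dim} with the entropy/Lyapunov-exponent behaviour of the family $\mu^k$ supplied by \thmref{th:estimate}. Recall from \thmref{th:dim} that for each $i$ and each $k$ one has
$$
\dim_H\nu_i^k \le \frac{E_i(\mu^k)}{\l_i(\mu^k)-\l_{i+1}(\mu^k)}\;,
$$
where $E_i(\mu^k)=E_{\mu^k}(\Gr_i,\nu_i^k)$ is the corresponding differential entropy. Since $E_i(\mu^k)\le h(\G,\mu^k)\le H(\mu^k)$ and the entropies $H(\mu^k)$ are uniformly bounded by \thmref{th:estimate}, say by a constant $C$, it suffices to show that the largest spectral gap $\max_i\bigl(\l_i(\mu^k)-\l_{i+1}(\mu^k)\bigr)$ tends to infinity; then the minimum over $i$ of the right-hand side above is at most $C/\max_i(\l_i-\l_{i+1})\to 0$, which forces $\min_i\{\dim_H\nu_i^k\}\to 0$.

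The remaining point, then, is purely a statement about the Lyapunov spectrum: I would show that if the length $\|\l(\mu^k)\|$ of the Lyapunov vector goes to infinity (equivalently, the top exponent $\l_1(\mu^k)\to\infty$, which is exactly the conclusion of \thmref{th:estimate}), then at least one of the gaps $\l_i(\mu^k)-\l_{i+1}(\mu^k)$ goes to infinity. This is elementary: the exponents satisfy $\sum_i\l_i=0$ and $\l_1>\l_2>\dots>\l_d$, so $\l_1=\l_1-\l_d=\sum_{i=1}^{d-1}(\l_i-\l_{i+1})$, a sum of $d-1$ nonnegative terms. Hence $\max_{1\le i\le d-1}(\l_i(\mu^k)-\l_{i+1}(\mu^k)) \ge \l_1(\mu^k)/(d-1)$, which tends to infinity with $k$. (Here one uses \thmref{th:simple}: Zariski density of $\G$ guarantees the spectrum of each $\mu^k$ is simple, so all the gaps are strictly positive and \thmref{th:dim} applies to every $\nu_i^k$.)

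Finally I would assemble these observations. For any $\eta>0$, choose $k$ large enough that $\l_1(\mu^k)>(d-1)C/\eta$; letting $i=i(k)$ realise the maximal gap, the inequality $\dim_H\nu_{i}^k\le E_i(\mu^k)/(\l_i(\mu^k)-\l_{i+1}(\mu^k))\le C/(\l_1(\mu^k)/(d-1))<\eta$ gives $\min_i\{\dim_H\nu_i^k\}<\eta$. This proves $\min_i\{\dim_H\nu_i^k\}\to 0$, hence the Main Theorem: for $k$ large the measure $\mu^k$ is non-degenerate (its support contains $\supp\mu$, which generates $\G$) and symmetric, and since some projection $\nu_i^k$ of its harmonic measure has Hausdorff dimension strictly below $\dim_H\Gr_i$, it is singular with respect to the smooth measure class on $\Gr_i$ and therefore $\mu^k$ is singular at infinity; when $\G$ is finitely generated one takes $\mu$ finitely supported (e.g.\ uniform on a finite generating set) so that each $\mu^k$ is finitely supported as well.

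I do not expect a genuine obstacle here: the hard analytic work has already been done in \thmref{th:dim} and \thmref{th:estimate}, and what remains is only the bookkeeping above, the one nontrivial ingredient being the pigeonhole observation $\max_i(\l_i-\l_{i+1})\ge\l_1/(d-1)$ that converts ``the Lyapunov vector is long'' into ``some spectral gap is large.'' The only point requiring a little care is to make sure the estimate $E_i(\mu^k)\le H(\mu^k)$ is applied correctly — it follows from the chain $E_i\le h(\G,\mu^k)\le H(\mu^k)$ recorded in \secref{sec:entr}, valid because each $\nu_i^k$ is a $\mu^k$-stationary measure on a quotient of the Poisson boundary and $H(\mu^k)<\infty$.
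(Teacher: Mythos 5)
Your argument is correct and matches the paper's, which also treats \thmref{th:main} as an immediate consequence of \thmref{th:dim} and \thmref{th:estimate} plus the pigeonhole observation you spell out. One small slip: the identity $\l_1=\l_1-\l_d$ is false in general (it would require $\l_d=0$); what you actually want is that $\sum_i\l_i=0$ and $\l_1>0$ force $\l_d<0$, hence $\l_1-\l_d\ge\l_1$, and then the telescoping $\l_1-\l_d=\sum_{i=1}^{d-1}(\l_i-\l_{i+1})$ still yields $\max_i\bigl(\l_i(\mu^k)-\l_{i+1}(\mu^k)\bigr)\ge\l_1(\mu^k)/(d-1)\to\infty$, so the rest of your argument goes through unchanged.
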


In its turn, \thmref{th:main} is an immediate consequence of a combination of
the inequality from \thmref{th:dim} and

\begin{thm} \label{th:estimate}
The measures $\mu^k$ \eqref{eq:mu_k} have the property that the entropies
$H(\mu^k)$ are uniformly bounded, whereas the lengths of their Lyapunov
vectors $\l(\mu^k)$ (equivalently, the top Lyapunov exponents $\l_1(\mu^k)$)
go to infinity.
\end{thm}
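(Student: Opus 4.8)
The plan is to treat the two assertions separately, since they point in opposite directions. For the \emph{uniform boundedness of the entropies} $H(\mu^k)$, the key observation is that $\mu^k$ is supported on $\supp\mu \cup \{\g^k,\g^{-k}\}$, and the mixture structure \eqref{eq:mu_k} means that $H(\mu^k)$ decomposes as a weighted sum of the entropy of the ``choice of component'' (a fixed finite quantity, $\log$ of a small integer) plus $\tfrac12 H(\mu)$ coming from the $\mu$-part plus $\tfrac14\cdot 0 + \tfrac14\cdot 0$ from the two atoms, which contribute nothing since they are single points. Concretely, $H(\mu^k) \le H(\tfrac12,\tfrac14,\tfrac14) + \tfrac12 H(\mu) = \tfrac32\log 2 + \tfrac12 H(\mu)$, independently of $k$. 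Since $H(\mu)<\infty$ by our standing assumptions, this bound is uniform. The same bound then transfers to the asymptotic entropies via $h(\G,\mu^k)\le H(\mu^k)$ from \eqref{eq:entr}.

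The substantive part is the claim that $\|\l(\mu^k)\| \to \infty$ (equivalently $\l_1(\mu^k)\to\infty$, since the coordinates of $\l$ are ordered and sum to zero, so the length of the Lyapunov vector is comparable to its top coordinate). First I would exploit the Furstenberg formula \eqref{eq:furst} in the form $\l_1(\mu^k) = \sum_g \mu^k(g)\int_{\Gr_1}\log\frac{\|gv\|}{\|v\|}\,d\nu^k_1(\ov v)$, where $\nu^k_1$ is the stationary measure on projective space for $(\G,\mu^k)$. Splitting according to \eqref{eq:mu_k}, the $\mu$-part contributes $\tfrac12\sum_g\mu(g)\int\log\frac{\|gv\|}{\|v\|}\,d\nu^k_1$, which is bounded below by $-\tfrac12\sum_g\mu(g)\log\|g^{-1}\|$, a finite constant by the first-moment assumption (uniformly in $k$). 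So it suffices to bound from below the contribution of the two atoms $\g^k$ and $\g^{-k}$, namely $\tfrac14\int\log\frac{\|\g^k v\|}{\|v\|}\,d\nu^k_1 + \tfrac14\int\log\frac{\|\g^{-k}v\|}{\|v\|}\,d\nu^k_1$, and show it tends to $+\infty$.

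To control this, since $\g$ is $\R$-regular it is diagonalizable with eigenvalues of pairwise distinct moduli; let $\xi^+,\xi^-\in\Gr_1$ denote the attracting fixed points of $\g$ and $\g^{-1}$ on projective space (the lines spanned by the top eigenvectors). For a point $\ov v$ at projective distance $\ge\delta$ from the \emph{repelling} hyperplane of $\g$, one has $\log\frac{\|\g^k v\|}{\|v\|} \ge k(\log|\beta_1|-\log|\beta_2|) - C_\delta$ where $\beta_1,\beta_2$ are the two largest eigenvalue moduli of $\g$ and $C_\delta$ is uniform; symmetrically for $\g^{-1}$. Thus the atomic contribution is at least $\tfrac{k}{4}(\text{positive constant}) - C_\delta - \tfrac14\big(\nu^k_1(\text{bad set near repelling hyperplane of }\g) + \text{same for }\g^{-1}\big)\cdot(\text{large}),$ and the whole argument reduces to showing that the $\nu^k_1$-mass near the repelling hyperplanes does not concentrate badly. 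The hard part will be exactly this: ruling out that the stationary measures $\nu^k_1$ escape into the ``bad'' regions as $k\to\infty$. I expect this to follow from a uniform (in $k$) non-atomicity / regularity estimate for $\nu^k_1$ near proper subspaces — obtained either from a contraction argument using that $\mu^k$ still contains the fixed generating part $\tfrac12\mu$ (so the $(\G,\mu^k)$-walk mixes at least as fast as a lazy version of the $(\G,\mu)$-walk, which has a non-atomic stationary measure charging no proper subvariety by \thmref{th:sub}), or by a more direct compactness argument: any weak-$*$ limit of $\nu^k_1$ would have to be $\tfrac12\mu$-stationary after absorbing the atomic parts, and one rules out full concentration on the $\g$- and $\g^{-1}$-invariant hyperplanes using Zariski density. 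Once a bound of the form $\nu^k_1(\{\text{within }\delta\text{ of the repelling hyperplane of }\g^{\pm}\}) \le \varepsilon$ is available for suitable $\delta,\varepsilon$ uniformly for all large $k$, the estimate above gives $\l_1(\mu^k)\ge \tfrac{k}{4}c - C$ for constants $c>0$, $C<\infty$, which is the desired divergence.
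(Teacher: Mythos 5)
Your proof of the entropy bound matches the paper's essentially word for word, and the overall scaffolding for the Lyapunov exponent (Furstenberg's formula applied to $\mu^k$, splitting the integrand into the $\mu$-part and the $\g^{\pm k}$-parts, bounding the $\mu$-part by the first moment, and then trying to show the atomic part diverges) is also the paper's route. However there are two genuine gaps in the substantive half.

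First, your estimate on the atomic contribution is set up as if one has to worry separately about the integral over the ``bad'' region near the repelling hyperplanes being very negative: you write that the atomic contribution is bounded below by $\tfrac{k}{4}c - C_\delta - \tfrac14\nu^k_1(\text{bad set})\cdot(\text{large})$, and then say the whole problem reduces to controlling $\nu^k_1$ of the bad set from above. This is unnecessary, and the paper's Claim~\ref{claim:g^k} is exactly the observation you are missing: one should treat the contributions of $\g^k$ and $\g^{-k}$ \emph{together}, and the sum
$\log\tfrac{\|\g^k v\|}{\|v\|}+\log\tfrac{\|\g^{-k}v\|}{\|v\|}$
is bounded \emph{below} uniformly over all $v\neq 0$ and all $k\ge0$. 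This is a one-line Cauchy--Schwarz computation after diagonalizing $\g=h^{-1}\d h$: for diagonal $\d$, $\|\d^k v\|\,\|\d^{-k}v\|\ge\langle\d^k v,\d^{-k}v\rangle=\|v\|^2$, and conjugation by $h$ only costs a fixed constant. With this in hand, there is no ``potentially large negative'' term anywhere: off the good region the integrand is merely bounded, and the divergence comes entirely from the good region.

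Second, and more seriously, the crucial step — a $k$-independent lower bound on $\nu^k_1(U)$ for a fixed good open set $U$ — is exactly where you say ``I expect this to follow from\ldots'' and offer two unworked sketches. Your route (a) (exploiting that $\mu^k\ge\tfrac12\mu$ and hence $(\mu^k)^{*s}\ge 2^{-s}\mu^{*s}$) is indeed part of the paper's argument: it yields $\nu^k_1(U)\ge\e\,\nu^k_1(g^{-1}U)$ with $\e>0$ independent of $k$ for each $g\in\supp\mu^{*s(g)}$. But that inequality alone does not give a positive lower bound. The missing ingredient is \thmref{th:limset} (Guivarc'h): the $\G$-action on the limit set $L_\G$ is \emph{minimal}, hence by \propref{pr:min} finitely many translates $\g_1 U,\dots,\g_r U$ cover $L^1_\G$, and since every $\nu^k_1$ is supported on $L^1_\G$, summing the stationarity inequalities over this finite set forces $\nu^k_1(U)\ge\e/r$ for all $k$. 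Without minimality of the limit set (or an equivalent finite-covering argument) your sketch does not close; a weak-$*$ compactness argument as in your route (b) also does not obviously exclude a limit measure concentrated on a proper subvariety invariant for the limit, since the atomic parts $\tfrac14(\d_{\g^k}+\d_{\g^{-k}})$ do not pass to a stationary limit in any evident way. So the proposal correctly identifies where the difficulty sits, but the key lemma that resolves it is not supplied.
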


The rest of this Section is devoted to a proof of \thmref{th:estimate} split
into a number of separate claims. The first one is obvious:

\begin{claim}\label{pr:h is bdd}
The measures $\mu^k$ have uniformly bounded entropies $H\left(\mu^k\right)$.
\end{claim}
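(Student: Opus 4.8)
\textbf{Proof plan for Claim~\ref{pr:h is bdd}.} The claim to be proved is that the discrete entropies $H(\mu^k)$ of the measures $\mu^k = \frac12\mu + \frac14(\d_{\g^k}+\d_{\g^{-k}})$ are uniformly bounded in $k$. The plan is to use concavity of the entropy functional together with the elementary behaviour of $H$ under the operations involved in forming $\mu^k$. First I would recall that for a convex combination $p\,m_1 + q\,m_2$ of probability measures on a countable set one has the bound $H(p\,m_1+q\,m_2)\le p\,H(m_1)+q\,H(m_2)+H(p,q)$, where $H(p,q)=-p\log p-q\log q$ is the binary entropy; this follows either from concavity and the grouping identity for entropy, or from a direct computation. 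Applying this with $p=\tfrac12$, $q=\tfrac12$, $m_1=\mu$, $m_2=\tfrac12(\d_{\g^k}+\d_{\g^{-k}})$ gives
$$
H(\mu^k)\le \tfrac12 H(\mu) + \tfrac12 H\!\left(\tfrac12\d_{\g^k}+\tfrac12\d_{\g^{-k}}\right) + \log 2 \;.
$$

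The second term is trivial to control: the measure $\tfrac12\d_{\g^k}+\tfrac12\d_{\g^{-k}}$ is supported on at most two points (exactly two when $\g$ has infinite order, which is automatic since $\g$ is $\R$-regular hence not of finite order), so its entropy is at most $\log 2$, independently of $k$. Consequently
$$
H(\mu^k)\le \tfrac12 H(\mu) + 2\log 2
$$
for every $k$, and since $H(\mu)<\infty$ by our standing assumption (finiteness of the first moment for a discrete subgroup forces finiteness of the entropy, as noted in Section~\ref{sec:entr}), the right-hand side is a finite constant not depending on $k$. This is exactly the asserted uniform bound, and by \eqref{eq:entr} the asymptotic entropies $h(\G,\mu^k)\le H(\mu^k)$ are then uniformly bounded as well.

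There is essentially no obstacle here; the only point requiring a moment's care is making sure the atoms $\g^k$ and $\g^{-k}$ are distinct from each other (so that the auxiliary two-point measure really has entropy at most $\log 2$ rather than needing a separate trivial case) and that they may overlap with atoms of $\mu$ without harming the inequality — but the subadditivity bound above is insensitive to such overlaps, since it only ever overestimates. Hence the claim follows immediately, which is why the authors call it obvious; I would present it in two or three lines along the lines above.
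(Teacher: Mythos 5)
Your argument is correct, and the paper itself offers no proof — the authors simply label the claim ``obvious'' immediately before stating it — so there is nothing to compare against beyond the fact that your subadditivity bound $H(p\,m_1+q\,m_2)\le p\,H(m_1)+q\,H(m_2)+H(p,q)$ is indeed the standard reason one would call this obvious. Your computation $H(\mu^k)\le\tfrac12 H(\mu)+\tfrac12\log 2+\log 2$ is exactly right (the constant $2\log 2$ you quote is a slight overshoot of the sharp $\tfrac32\log 2$, but harmlessly so), and the remark that possible overlap of $\g^{\pm k}$ with $\supp\mu$ only helps is the one small point worth saying aloud.
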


In view of the discussion from \secref{sec:entr} it immediately implies

\begin{cor}\label{cor:unif}
The asymptotic entropies $h\left(\G,\mu^k\right)$ and therefore all the
differential entropies $E^k_i=E_{\mu^k}\left(\Gr_i,\nu_i^k\right)$ are
uniformly bounded.
\end{cor}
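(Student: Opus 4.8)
\textbf{Proof proposal for \corref{cor:unif}.}

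The plan is to derive everything from \claimref{pr:h is bdd} (which we may assume: the Shannon entropies $H(\mu^k)$ are uniformly bounded, say by a constant $M$) together with the general inequalities recalled in \secref{sec:entr}. First I would invoke the defining inequality $h(\G,\mu)\le H(\mu)$ from \eqref{eq:entr}: applied to each $\mu^k$ in place of $\mu$, this gives $h(\G,\mu^k)\le H(\mu^k)\le M$ for all $k$, so the asymptotic entropies are uniformly bounded. Note that this step requires $H(\mu^k)<\infty$, which is exactly what \claimref{pr:h is bdd} supplies, so the asymptotic entropy is well-defined in the first place.

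Next, to pass from the asymptotic entropy to the differential entropies $E^k_i$, I would use the inequality $E_\mu(X,\th)\le h(\G,\mu)$ quoted in \secref{sec:entr} (from \cite{Furstenberg71,Kaimanovich-Vershik83,Nevo-Zimmer02}), valid for any $\mu$-stationary measure. Here the relevant $\G$-space is the Grassmannian $\Gr_i$ and the relevant stationary measure is $\nu^k_i$, the harmonic measure's projection for the random walk $(\G,\mu^k)$; since $\supp\mu^k$ generates $\G$ as a semigroup (our standing assumption in this Section, inherited by $\mu^k$ because it dominates $\frac12\mu$), the measures $\nu^k_i$ are indeed $\mu^k$-stationary as noted in \secref{sec:harm}. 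Applying the inequality with $\mu=\mu^k$, $X=\Gr_i$, $\th=\nu^k_i$ yields
$$
E^k_i = E_{\mu^k}(\Gr_i,\nu^k_i) \le h(\G,\mu^k) \le M
$$
for every $k$ and every $i\in\{1,\dots,d-1\}$, which is the assertion.

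There is essentially no obstacle here: the corollary is a one-line consequence of chaining two standard inequalities, the only point to check being that the hypotheses of those inequalities (finite entropy of $\mu^k$, $\mu^k$-stationarity of $\nu^k_i$) are met, and both follow from facts already in place. One might add the trivial remark that the bound $M$ on $H(\mu^k)$, and hence on $h(\G,\mu^k)$ and on all the $E^k_i$, can be taken independent of $i$ as well as of $k$, which is precisely the uniformity needed when \corref{cor:unif} is combined with the dimension estimate of \thmref{th:dim} to prove \thmref{th:main}.
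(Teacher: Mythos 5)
Your argument is exactly the paper's: the corollary is stated as an immediate consequence of \claimref{pr:h is bdd} ``in view of the discussion from \secref{sec:entr}'', i.e.\ the chain $E^k_i\le h(\G,\mu^k)\le H(\mu^k)$, which is precisely what you wrote out. Correct, same approach, with the hypothesis checks (finite entropy, stationarity of $\nu^k_i$) a welcome but routine addition.
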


For estimating the top Lyapunov exponent $\l_1\left(\mu^k\right)$ we shall use
the Furstenberg formula \eqref{eq:furst}, by which
\begin{equation} \label{eq:l1}
\begin{aligned}
\l_1 \left(\mu^k\right)
 &=  \frac12  \sum_g \mu(g)\int_{P\R^d} \log\frac{\|gv\|}{\|v\|}\
 d\nu^k_1(\ov{v}) \\
  &\qquad + \frac14\int_{P\R^d}
  \left( \log\frac{\|\g^k v\|}{\|v\|}+ \log\frac{\|\g^{-k} v\|}{\|v\|}\right)
  d\nu^k_1(\ov{v}) \;.
\end{aligned}
\end{equation}

The absolute value of the first term of this sum is uniformly bounded because
$\mu$ has a finite first moment. For dealing with the second term of the sum
\eqref{eq:l1} we need the following claims.

\begin{claim}\label{claim:g^k}
The sum
$$
\left( \log\frac{\|\g^k v\|}{\|v\|}
 + \log\frac {\|\g^{-k} v\|} {\|v\|}\right)
$$
is bounded from below uniformly on $v\in\R^d\setminus\{0\}$ and $k\ge 0$.
\end{claim}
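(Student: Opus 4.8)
The plan is to reduce the estimate to a norm adapted to $\g$ and then apply the Cauchy--Schwarz inequality; the one point to watch is that the eigenbasis of $\g$ need not be orthonormal, so one cannot argue directly with the given Euclidean structure, and the naive bound $\|\g^k v\|\,\|\g^{-k}v\|\ge\|v\|^2/(\|\g^k\|\,\|\g^{-k}\|)$ is useless since its right-hand side decays in $k$.

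Since $\g$ is $\R$-regular it is diagonalizable over $\R$, so there is an invertible matrix $P$ with $P^{-1}\g P=D:=\diag(t_1,\dots,t_d)$, the $t_j$ being the (real, nonzero) eigenvalues of $\g$. I would introduce the norm $\|v\|_*=\|P^{-1}v\|$ on $\R^d$; by equivalence of norms on a finite-dimensional space there are constants $0<c_1\le c_2$, depending only on $\g$, with $c_1\|v\|\le\|v\|_*\le c_2\|v\|$ for all $v$. Writing $w=P^{-1}v=(w_1,\dots,w_d)$ and using $P^{-1}\g^k v=D^k w$, $P^{-1}\g^{-k}v=D^{-k}w$, one gets $\|\g^k v\|_*^2=\sum_j|t_j|^{2k}|w_j|^2$, $\|\g^{-k}v\|_*^2=\sum_j|t_j|^{-2k}|w_j|^2$ and $\|v\|_*^2=\sum_j|w_j|^2$. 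Next I would apply Cauchy--Schwarz to the coordinate vectors $\big(|t_j|^{k}|w_j|\big)_j$ and $\big(|t_j|^{-k}|w_j|\big)_j$, which gives
$$
\|v\|_*^4=\Big(\sum_j|w_j|^2\Big)^2\le\Big(\sum_j|t_j|^{2k}|w_j|^2\Big)\Big(\sum_j|t_j|^{-2k}|w_j|^2\Big)=\|\g^k v\|_*^2\,\|\g^{-k}v\|_*^2 ,
$$
so that $\|\g^k v\|_*\,\|\g^{-k}v\|_*\ge\|v\|_*^2$ for every $v\neq 0$ and every $k\ge 0$.

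Finally I would translate this back to the Euclidean norm, obtaining
$$
\|\g^k v\|\,\|\g^{-k}v\|\ge c_2^{-2}\,\|\g^k v\|_*\,\|\g^{-k}v\|_*\ge c_2^{-2}\,\|v\|_*^2\ge c_1^2 c_2^{-2}\,\|v\|^2 ,
$$
and hence, taking logarithms,
$$
\log\frac{\|\g^k v\|}{\|v\|}+\log\frac{\|\g^{-k}v\|}{\|v\|}\ge 2\log(c_1/c_2) ,
$$
a constant independent of $v$ and $k$, which is the desired lower bound. Note that only diagonalizability of $\g$ over $\R$ is used, consistently with the Remark preceding the claim; $\R$-regularity as such is not needed here. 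There is essentially no obstacle beyond choosing the adapted norm $\|\cdot\|_*$ — the only slightly delicate point, already flagged, is that one must not try to run Cauchy--Schwarz directly in the given Euclidean coordinates.
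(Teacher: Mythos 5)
Your proof is correct and is essentially the same argument as the paper's: both diagonalize $\g$, apply Cauchy--Schwarz in the eigenbasis to get $\|D^k w\|\,\|D^{-k}w\|\ge\|w\|^2$, and then absorb the change of basis into a constant depending only on the conjugating matrix. The paper phrases this with the inner-product form $\|\d^k u\|\,\|\d^{-k}u\|\ge\langle\d^k u,\d^{-k}u\rangle=\|u\|^2$ and operator-norm bounds on $h,h^{-1}$, while you introduce the adapted norm $\|\cdot\|_*$ and invoke equivalence of norms, but the two yield the same constant and the same conclusion.
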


\begin{proof}
If $\d$ is a diagonal matrix, then obviously,
$$
\|\d^k v\| \|\d^{-k}v\| \ge \la \d^k v,\d^{-k} v\ra = \|v\|^2 \;.
$$
Now, $\g=h^{-1}\d h$ is diagonalizable, whence
$$
\begin{aligned}
\|\g^k v\| \|\g^{-k}v\| &\ge \|h\|^{-2} \|\d^k h v\| \|\d^{-k} h v \| \ge
\|h\|^{-2} \|hv\|^2\\ &\ge \|h\|^{-2}\|h^{-1}\|^{-2} \|v\|^2 \;.
\end{aligned}
$$
\end{proof}

\begin{claim} \label{claim:opens}
For any open subset $U\subset P\R^d$ which intersects the limit set $L^1_\G$
(see \secref{sec:limit}), the measures $\nu^k_1(U)$ are bounded away from zero
uniformly on $k$.
\end{claim}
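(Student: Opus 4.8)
The plan is to exploit the $\mu^k$-stationarity of $\nu^k_1$ together with the minimality of the $\G$-action on the limit set $L^1_\G$ (\propref{pr:min}), playing these off against the uniform entropy bound from \corref{cor:unif}. First I would reduce to a purely geometric covering statement: since $U$ is open and $U\cap L^1_\G\neq\emp$, \propref{pr:min} (in the form noted in the remark following it, applied on $\Gr_1$) produces finitely many $\g_1,\dots,\g_r\in\G$ with $L^1_\G\subset\bigcup_i\g_i U$. Because $\supp\nu^k_1\subset L^1_\G$ for every $k$ (this holds for every $\mu^k$ by the discussion in \secref{sec:limit}, as $\supp\mu^k$ generates $\G$ as a semigroup), we get $\sum_i\nu^k_1(\g_i U)\ge\nu^k_1(L^1_\G)=1$, hence $\max_i\nu^k_1(\g_i U)\ge 1/r$ for all $k$.

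The remaining task is to transfer a lower bound on $\nu^k_1(gU)$ to a lower bound on $\nu^k_1(U)$, uniformly in $k$, whenever $g$ lies in the fixed finite set $\{\g_1,\dots,\g_r\}$. Here is where stationarity enters. Since $\nu^k_1$ is $\mu^k$-stationary, for every $n$ one has $\nu^k_1=\sum_{w}(\mu^k)^{*n}(w)\,w\nu^k_1$, so $\nu^k_1(U)\ge(\mu^k)^{*n}(w)\,\nu^k_1(w^{-1}U)$ for any single group element $w$ reachable in $n$ steps. The obstacle is that for the element $w=g^{-1}$ we need a lower bound on the probability $(\mu^k)^{*n}(g^{-1})$ that does not degenerate as $k\to\infty$. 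Since each $\mu^k$ assigns mass at least $\tfrac12\mu$ to the original step distribution $\mu$, and $\supp\mu$ generates $\G$ as a semigroup, there is a fixed length $n_0$ (depending only on $g$, not on $k$) and a fixed positive constant $c_0=c_0(g)>0$ with $(\mu^k)^{*n_0}(g^{-1})\ge c_0$ for all $k$; indeed one writes $g^{-1}$ as a product of $n_0$ elements of $\supp\mu$ and bounds below by the probability of taking exactly those $\mu$-steps, each of which costs a factor $\ge\tfrac12\mu(\cdot)$. Applying this with $w=\g_{i_0}^{-1}$ for the index $i_0$ achieving $\nu^k_1(\g_{i_0}U)\ge 1/r$ gives
$$
\nu^k_1(U)\ \ge\ (\mu^k)^{*n_0}\bigl(\g_{i_0}^{-1}\bigr)\,\nu^k_1\bigl(\g_{i_0}U\bigr)\ \ge\ \frac{c_0}{r}\,,
$$
and taking the minimum of $c_0(\g_i)$ over the finite set $\{\g_1,\dots,\g_r\}$ yields a positive lower bound valid for all $k$.

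The main obstacle, as indicated, is the uniform-in-$k$ lower bound on the $n_0$-step transition probabilities $(\mu^k)^{*n_0}(g^{-1})$: a priori the measures $\mu^k$ are changing with $k$, and one must be sure the probability of realizing the fixed word for $g^{-1}$ does not shrink. This is handled by the simple observation that $\mu^k\ge\tfrac12\mu$ pointwise, so any $\mu$-path is a $\mu^k$-path with probability diminished by at most $2^{-n_0}$, a factor independent of $k$. A minor point to check is that $n_0$ and the word can be chosen uniformly — but since $g$ ranges over the fixed finite set coming from \propref{pr:min}, which itself does not depend on $k$, only finitely many words are needed and one takes $n_0$ to be their maximal length. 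Note also that the claim is stated for $\Gr_1$, which is all that \eqref{eq:l1} requires, but the same argument works verbatim on any $\Gr_i$.
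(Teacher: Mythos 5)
Your proposal is correct and takes essentially the same approach as the paper: both use \propref{pr:min} to produce a finite cover of $L^1_\G$ by $\G$-translates of $U$, both use $\supp\nu^k_1\subset L^1_\G$ so the cover has total $\nu^k_1$-mass $1$, and both exploit $\mu^k\ge\tfrac12\mu$ pointwise to get a $k$-independent lower bound on the convolution powers $(\mu^k)^{*s}(g)\ge 2^{-s}\mu^{*s}(g)$, which via $\mu^k$-stationarity transfers a lower bound on $\nu^k_1(gU)$ to one on $\nu^k_1(U)$. The only cosmetic difference is that the paper sums the resulting inequalities over the finite covering set while you take a maximum; both yield the same uniform positive lower bound.
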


\begin{proof}
By \propref{pr:min} there exists a finite set $A\subset\G$ such that
$$
L^1_\G\subset \bigcup_{g\in A} g^{-1} U \;.
$$
Since $\supp\mu$ generates $\G$ as a semigroup, for each $g\in A$ there exists
an integer $s=s(g)$ such that $g\in\supp\mu^{*s}$. Then by
$\mu^k$-stationarity of $\nu_1^k$, for each $g\in A$
$$
\nu^k_1(U) \ge \frac1{2^s} \mu^{*s}(g)g\nu_1^k(U) = \frac1{2^s}
\mu^{*s}(g)\nu_1^k\left(g^{-1} U\right) \ge \e \nu_1^k\left(g^{-1} U\right)
$$
for a certain $\e=\e(A)>0$. Summing up the above inequalities over all $g\in
A$ we obtain
$$
|A| \nu^k_1(U) \ge \e \sum_{g\in A} \nu_1^k\left(g^{-1} U\right) \ge
\e\nu_1^k(L^1_\G) = \e \;,
$$
whence the claim.
\end{proof}

Now we are ready to prove

\begin{claim} \label{claim:l1}
$$
\lim_{k\to\infty}\l_1\left(\mu^k\right)=+\infty \;.
$$
\end{claim}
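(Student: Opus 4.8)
The plan is to feed the two preparatory claims into the Furstenberg formula \eqref{eq:l1}. Set
$$
f_k(\ov v)=\log\frac{\|\g^k v\|}{\|v\|}+\log\frac{\|\g^{-k}v\|}{\|v\|}\;,\qquad \ov v\in P\R^d\;.
$$
Since $\mu$ has a finite first moment, the first term on the right of \eqref{eq:l1} is bounded in absolute value uniformly in $k$, so it is enough to show $\int_{P\R^d}f_k\,d\nu_1^k\toto_{k\to\infty}+\infty$. By \claimref{claim:g^k} the integrand $f_k$ is bounded below by a constant independent of $k$ and of $\ov v$; hence it suffices to produce a single open set $U\subset P\R^d$ on which $f_k$ grows linearly in $k$, uniformly in $\ov v\in U$, and whose $\nu_1^k$-measure is bounded away from $0$ uniformly in $k$ — the latter being exactly what \claimref{claim:opens} delivers, once we know that $U$ meets the limit set $L^1_\G$.

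First I would make the linear growth explicit. Writing $\g=h^{-1}\d h$ with $\d=\diag(d_1,\dots,d_d)$ and ordering the eigenvalues (which are real, with pairwise distinct absolute values, $\g$ being $\R$-regular) so that $|d_1|>\dots>|d_d|$, one has $\prod_i|d_i|=\det\g=1$, hence $\tau:=\log(|d_1|/|d_d|)>0$. Using $\|\d^k hv\|\ge|d_1|^k|(hv)_1|$ and $\|\d^{-k}hv\|\ge|d_d|^{-k}|(hv)_d|$ together with the elementary bounds $\|w\|/\|h\|\le\|h^{-1}w\|\le\|h^{-1}\|\,\|w\|$, one gets, for every $\ov v$ lying off the two projective hyperplanes $PH_1,PH_d\subset P\R^d$ that are the images of $\{v:(hv)_1=0\}$ and $\{v:(hv)_d=0\}$,
$$
f_k(\ov v)\ \ge\ k\tau\ +\ \log\frac{|(hv)_1|\,|(hv)_d|}{\|hv\|^2}\ -\ 2\log\bigl(\|h\|\,\|h^{-1}\|\bigr)\;.
$$
The middle term is a continuous, real-valued function of $\ov v$ on $P\R^d\setminus(PH_1\cup PH_d)$, hence is bounded below on any compact subset of this open set; so on such a subset one has $f_k\ge k\tau-C$ with $C$ independent of $k$.

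Next I would choose the set. Since $\supp\mu^k$ contains $\supp\mu$, which generates $\G$ as a semigroup, $\supp\mu^k$ too generates the Zariski dense group $\G$ as a semigroup; thus the standing assumptions hold for $\mu^k$, \thmref{th:sub} applies, and $\supp\nu^k=L_\G$ (see \secref{sec:limit}), whence $\supp\nu_1^k=L^1_\G$ for every $k$. As $PH_1\cup PH_d$ is a proper algebraic subvariety of $\Gr_1$, it is $\nu_1^k$-negligible by \thmref{th:sub}; combined with $\supp\nu_1^k=L^1_\G$ this forces $L^1_\G\not\subset PH_1\cup PH_d$. I can therefore pick $\ov{v_0}\in L^1_\G\setminus(PH_1\cup PH_d)$ and a small open ball $U\ni\ov{v_0}$ with $\ov U\cap(PH_1\cup PH_d)=\emp$. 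Then $U$ meets $L^1_\G$, so \claimref{claim:opens} provides $\e_0>0$ with $\nu_1^k(U)\ge\e_0$ for all $k$, while $f_k\ge k\tau-C$ on $\ov U$ by the previous paragraph. Splitting the integral over $U$ and its complement and using on the latter the uniform lower bound $-C_0$ from \claimref{claim:g^k},
$$
\int_{P\R^d}f_k\,d\nu_1^k\ \ge\ \e_0\,(k\tau-C)-C_0\ \toto_{k\to\infty}\ +\infty\;,
$$
which together with \eqref{eq:l1} gives $\l_1(\mu^k)\toto_{k\to\infty}+\infty$.

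The step I expect to be the real obstacle is making the lower bound for $f_k$ genuinely uniform — simultaneously in $k$ and over a \emph{fixed} set of uniformly positive $\nu_1^k$-measure. The correction term $\log\bigl(|(hv)_1|\,|(hv)_d|/\|hv\|^2\bigr)$ degenerates to $-\infty$ along the two critical hyperplanes, so $U$ must be taken with closure disjoint from $PH_1\cup PH_d$; this is possible only because $L^1_\G$ is not contained in a proper subvariety, which is precisely where Zariski density of $\G$ (through \thmref{th:sub}) enters. Everything else is the routine diagonalization estimate and a direct appeal to the already-established \claimref{claim:opens}.
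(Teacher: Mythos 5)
Your proof is correct and follows the same overall strategy as the paper: feed \claimref{claim:g^k} and \claimref{claim:opens} into \eqref{eq:l1}, isolate an open set $U$ on which the diagonalization estimate gives linear growth $\ge k\tau-C$, split the integral, and let $k\to\infty$. The algebra of the lower bound $f_k(\ov v)\ge k\tau+\log\bigl(|(hv)_1|\,|(hv)_d|/\|hv\|^2\bigr)-2\log(\|h\|\,\|h^{-1}\|)$ is the same as the paper's two displayed inequalities for $\|\g^{\pm k}v\|/\|v\|$, just multiplied together.

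The one place where you genuinely diverge from the paper is in the choice of $U$ and in the verification of the hypothesis of \claimref{claim:opens}. The paper fixes the explicit set $U=\{\ov v: \la hv,e_1\ra/\|hv\|>\b,\ \la hv,e_d\ra/\|hv\|>\b\}$ with $\b=1/2$ and only observes that it is non-empty; but \claimref{claim:opens} requires $U\cap L^1_\G\neq\emp$, which is not the same thing (for a discrete Zariski dense $\G$ the limit set can be small). Your argument instead picks $\ov{v_0}\in L^1_\G\setminus(PH_1\cup PH_d)$ and a small ball around it, after first checking $L^1_\G\not\subset PH_1\cup PH_d$ via $\supp\nu_1^k=L^1_\G$ (a consequence of \thmref{th:limset} and $\mu^k$-stationarity, as in \secref{sec:limit}) together with \thmref{th:sub} (proper algebraic subvarieties are $\nu_1^k$-null). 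This makes the hypothesis of \claimref{claim:opens} explicit, and is the correct way to run the argument. (One could equally well keep the paper's coordinate description of $U$ but take $\b$ small enough that $U$ meets $L^1_\G$, which again relies on the same $L^1_\G\not\subset PH_1\cup PH_d$ observation; with the fixed $\b=1/2$ the paper's set could a priori miss the limit set.) In short: same route, but you supply the missing verification of $U\cap L^1_\G\neq\emp$, which is what \claimref{claim:opens} actually needs.
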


\begin{proof}
We shall fix a diagonalization $\g=h^{-1}\d h$ with
$\d=\diag(\d_1,\dots,\d_d)$ in such a way that $|\d_1|>1$ and $|\d_d|<1$, and
define the open set $U$ as
$$
U = \left\{\ov{v}\in P\R^d :
      \frac{\la h v,e_1\ra }{\|h v\|}>\b
\ \textrm{and}\ \frac{\la h v,e_d\ra }{\|h v\|}>\b \right\} \;,
$$
i.e., by requiring that the first and the last coordinates (with respect to
the standard basis $\E$) of the normalized vector $hv$ be greater than $\b$.
The value of $\b$ is chosen to make sure that $U$ is non-empty (for instance,
one can take $\b=1/2$).

If $\ov{v}\in U$, then
$$
\|\g^k v\|= \|h^{-1}\d^k h v\|\ge \|h\|^{-1} |\d_1|^k \la h v,e_1\ra
    \ge \|h\|^{-1} |\d_1|^k \b \|h v\| \;,
$$
and thus
$$
\frac{\|\g^k v\|}{\| v\|}\ge \|h^{-1}\|^{-1}~\|h\|^{-1}\b |\d_1|^k \;.
$$
In the same way,
$$
\frac{\|\g^{-k} v\|}{\| v\|}\ge \|h^{-1}\|^{-1}~\|h\|^{-1}\b |\d_d|^{-k} \;,
$$
so that
$$
\log\frac{\|\g^k v\|}{\| v\|} + \log\frac{\|\g^{-k} v\|}{\| v\|} \to \infty
$$
as $k\to\infty$ uniformly on $\ov{v}\in U$, which in view of \eqref{eq:l1} in
combination with \claimref{claim:g^k} and \claimref{claim:opens} finishes the
argument.
\end{proof}

\begin{rem}
The measure $\mu$ in our construction can clearly be chosen symmetric and, if
the group $\G$ is finitely generated, finitely supported. Obviously, the
measures $\mu^k$ then also have these properties, so that \emph{singular
harmonic measures can be produced by symmetric finitely supported measures on
the group}.
\end{rem}

\bibliographystyle{amsalpha}
\bibliography{C:/Sorted/MyTEX/mine}

\end{document}